\definecolor{verylight}{gray}{0.97}
\definecolor{light}{gray}{0.9}
\definecolor{medium}{gray}{0.85}
\definecolor{dark}{gray}{0.6}
 \def\opn#1#2{\def#1{\operatorname{#2}}} 
 \opn\chara{char} \opn\length{\ell} \opn\pd{pd} \opn\rk{rk}
 \opn\projdim{proj\,dim} \opn\injdim{inj\,dim} \opn\rank{rank}
 \opn\depth{depth} \opn\grade{grade} \opn\height{height}
 \opn\embdim{emb\,dim} \opn\codim{codim}
 \opn\Tr{Tr} \opn\bigrank{big\,rank}
 \opn\superheight{superheight}\opn\lcm{lcm}
 \opn\trdeg{tr\,deg}
 \opn\reg{reg} \opn\lreg{lreg} \opn\ini{in} \opn\lpd{lpd}
 \opn\size{size} \opn\sdepth{sdepth}
 \opn\link{link}\opn\fdepth{fdepth}\opn\lex{lex}
 \opn\div{div} \opn\Div{Div} \opn\cl{cl} \opn\Cl{Cl}
 \opn\Spec{Spec} \opn\Supp{Supp} \opn\supp{supp} \opn\Sing{Sing}
 \opn\Ass{Ass} \opn\Min{Min}\opn\Mon{Mon}
 \opn\Ann{Ann} \opn\Rad{Rad} \opn\Soc{Soc}
 \opn\Im{Im} \opn\Ker{Ker} \opn\Coker{Coker} \opn\Am{Am}
 \opn\Hom{Hom} \opn\Tor{Tor} \opn\Ext{Ext} \opn\End{End}
 \opn\Aut{Aut} \opn\id{id}
 \opn\nat{nat}
 \opn\pff{pf}
 \opn\Pf{Pf} \opn\GL{GL} \opn\SL{SL} \opn\mod{mod} \opn\ord{ord}
 \opn\Gin{Gin} \opn\Hilb{Hilb}\opn\sort{sort}
 \opn\aff{aff} \opn
\opn\relint{relint} \opn\st{st}
 \opn\lk{lk} \opn\cn{cn} \opn\core{core} \opn\vol{vol}  \opn\inp{inp} \opn\nilpot{nilpot}
 \opn\link{link} \opn\star{star}\opn\lex{lex}\opn\set{set}
 \opn\gr{gr}
 \def\pot#1#2{#1[\kern-0.28ex[#2]\kern-0.28ex]}
 \opn\dirlim{\underrightarrow{\lim}}
 \opn\inivlim{\underleftarrow{\lim}}
 \let\union=\cup
 \let\sect=\cap
 \let\to=\rightarrow
 \def\Implies{\ifmmode\Longrightarrow \else
         \unskip${}\Longrightarrow{}$\ignorespaces\fi}
 \def\implies{\ifmmode\Rightarrow \else
         \unskip${}\Rightarrow{}$\ignorespaces\fi}
 \def\iff{\ifmmode\Longleftrightarrow \else
         \unskip${}\Longleftrightarrow{}$\ignorespaces\fi}
 \newtheorem{Theorem}{Theorem}[section]
 \newtheorem{Lemma}[Theorem]{Lemma}
 \newtheorem{Corollary}[Theorem]{Corollary}
 \newtheorem{Proposition}[Theorem]{Proposition}
 \newtheorem{Remark}[Theorem]{Remark}
 \newtheorem{Examples}[Theorem]{Examples}
 \newtheorem{Definition}[Theorem]{Definition}
 \let\epsilon\varepsilon
 \let\kappa=\varkappa
 \def\qed{\ifhmode\textqed\fi
       \ifmmode\ifinner\quad\qedsymbol\else\dispqed\fi\fi}
 \def\textqed{\unskip\nobreak\penalty50
        \hskip2em\hbox{}\nobreak\hfil\qedsymbol
        \parfillskip=0pt \finalhyphendemerits=0}
 \def\dispqed{\rlap{\qquad\qedsymbol}}
 \opn\dis{dis}
 \def\pnt{{\raise0.5mm\hbox{\large\bf.}}}
 \opn\Lex{Lex}
\begin{document}

 \title {The binomial edge ideal of a pair of graphs}

 \author {Viviana Ene, J\"urgen Herzog, Takayuki Hibi and Ayesha Asloob Qureshi}

\address{Viviana Ene, Faculty of Mathematics and Computer Science, Ovidius University, Bd.\ Mamaia 124,
 900527 Constanta, Romania} \email{vivian@univ-ovidius.ro}

\address{J\"urgen Herzog, Fachbereich Mathematik, Universit\"at Duisburg-Essen, Campus Essen, 45117
Essen, Germany} \email{juergen.herzog@uni-essen.de}

\address{Takayuki Hibi, Department of Pure and Applied Mathematics, Graduate School of Information Science and Technology,
Osaka University, Toyonaka, Osaka 560-0043, Japan}
\email{hibi@math.sci.osaka-u.ac.jp}

\address{Ayesha Asloob Qureshi, Abdus Salam School of Mathematical Sciences, GC University,
Lahore. 68-B, New Muslim Town, Lahore 54600, Pakistan} \email{ayesqi@gmail.com}

 \begin{abstract}
We introduce a class of ideals generated by a set of $2$-minors of $m\times n$-matrix of indeterminates indexed by a pair of graphs. This class of ideals is a natural common generalization of binomial edge ideals and ideals generated by adjacent minors. We determine the minimal prime ideals of such ideals and give a lower bound for their degree of nilpotency. In some special cases we compute their Gr\"obner basis  and characterize unmixedness and Cohen--Macaulayness.
 \end{abstract}

\thanks{The first author was supported  by the JSPS Invitation Fellowship Programs for Research in Japan. Part of this paper was done during the visit of second author to Abdus Salam School of Mathematical Sciences. The fourth author contributed to this paper during her post-doctoral fellowship at Abdus Salam School of Mathematical Sciences.}
\subjclass{Primary 13P10, 13C13; Secondary 13C15, 13P25}
\keywords{Binomial ideals, Gr\"obner bases}

 \maketitle

 \section*{Introduction}
The study of ideals generated by minors of a generic matrix, mostly motivated by geometric questions,  has a long tradition, see the fundamental papers \cite{St90}, \cite{Ho} and the survey \cite{BC}. Classically these are ideals generated by all minors of a given size. More recently, research has focussed on ideals generated by arbitrary  sets of  minors of a generic matrix. Perhaps the first paper in this direction is that of Andrade \cite{A} from the 1981 in which regular sequences of minors are considered. In the last years, due to techniques used in algebraic statistic, it proves necessary  to study certain classes of binomial and determinantal ideals.  This includes ideals generated by adjacent minors, as introduced by Diaconis,  Eisenbud and Sturmfels \cite{DES} and further studied in  \cite{HS}, \cite{HSS} and \cite{HH}, as well the binomial edge ideals,  first considered in \cite{HHHKR} and recently generalized by \cite{RA}. The algebraic properties of this class of ideals  are widely  open, though several partial results are known, see for example \cite{EHH}.  From an algebraic point of view we are interested in the following questions: what are the associated primes of these ideals, and in particular their minimal primes, what is their Gr\"obner basis, when are these ideals reduced or prime, when are they Cohen-Macaulay or Gorenstein?

In this paper we introduce binomial edge ideals $J_{G_1,G_2}$  attached to a pair $(G_1,G_2)$  of finite graphs. This class of ideals generalizes the versions of binomial edge ideals,  considered in \cite{HHHKR} and \cite{RA}, but also includes ideals generated by adjacent minors which turn out to be the ideals attached to a pair of line graphs.

In Section~1 we study the Gr\"obner basis of these ideals. A  general description of these Gr\"obner bases seems to be extremely difficult. However in Theorem~\ref{quadratic} we succeed to classify those pairs of graphs for which $J_{G_1,G_2}$ has a quadratic Gr\"obner basis. Unlike to the classical binomial edge ideals, the binomial edge ideals attached to a pair of graphs are never radical, unless $G_1$ or $G_2$ is complete, see Theorem~\ref{radical}.

In Theorem~\ref{minprimes} of  Section~2 we describe quite explicitly the minimal prime ideals of $J_{G_1,G_2}$. They are essentially determined by the  so-called admissible sets of variables which are determined by data of the two graphs. The results obtained in Section~2 are applied in Section~3 to give a detailed description of all minimal prime ideals in the case that $G_1$ is a line graph of length 2 and $G_2$ is an arbitrary graph. The information on the minimal prime ideals is also used in the following Section~4 where the unmixed binomial edge ideals of pairs of graphs are characterized in Proposition~\ref{unmixed}. The condition for being unmixed is,  that one of the graphs is complete and the other graph satisfies certain numerical conditions related to its sets having the cut point property. In the case that one graph is complete and the other one is a cycle we fully classify in Proposition~\ref{unmixed cycle} the unmixed  binomial edge ideals. Though the  conditions  guaranteeing that the binomial edge ideals of a pair of graphs is unmixed are already pretty restrictive, the more they are restrictive for them to be Cohen--Macaulay. Under the assumption that $G_1$ is complete and $G_2$ is closed in the sense of \cite{HHHKR} and  $|V(G_2)|\geq |V(G_1)|\geq 3$, the unmixedness $J_{G_1,G_2}$ is characterized and the depth of $S/J_{G_1,G_2}$ is computed, see Theorem~\ref{Osaka}.  It follows that,  under the assumptions of the theorem, $J_{G_1,G_2}$ is Cohen--Macaulay only if both graphs are complete.

 For an ideal $I$ with radical $\sqrt{I}$,  the least number $k$   with the property that $(\sqrt{I})^k \subset I$ is called  index of nilpotency of $I$, and denoted $\nilpot(I)$. It is clear that $\nilpot(I)=1$ if and only if $I$ is a radical ideal. Thus, as noticed before,  $\nilpot(J_{G_1,G_2})=1$ if and only if $G_1$ or $G_2$ is complete. In the last section of this paper we give in Theorem~\ref{nilpot} a  lower bound for index of nilpotency of $J_{G_1,G_2}$ in terms of data of the graphs $G_1$ and $G_2$. Applying this result to an $m\times n$-matrix of adjacent minors one obtains that this lower bound is approximately $mn/16$.

 \section{Binomial edge ideals of pairs of graphs and their  Gr\"obner basis}
 Let $G_1$ be a graph on the vertex set $[m]$ and  $G_2$ a graph on the vertex set $[n]$. We fix a field $K$, let $X=(x_{ij})$ be an $(m\times n)$-matrix of indeterminates,  and denote by $K[X]$ the polynomial ring in the variables $x_{ij}$, $i=1,\ldots,m$ and $j=1,\ldots,n$.

 Let $e=\{i,j\}$ for some  $1 \leq i<j \leq m$ and $f=\{k,l\}$ for some $1 \leq k<l \leq n$. To the pair $(e,f)$ we assign the following $2$-minor of $X$:
 \[
 p_{e,f}=[i,j|k, l]=x_{ik}x_{jl}-x_{il}x_{jk}.
 \]
 The ideal
 \[
 J_{G_1,G_2}=(p_{e,f}\:\; e\in E(G_1), f\in E(G_2))
 \]
 is called the {\em binomial edge ideal} of the pair $(G_1,G_2)$.

 \begin{Examples}
 {\em
 (a) If $G_1$ and $G_2$ are complete graphs, then $J_{G_1,G_2}= I_2(X)$, the ideal of all $2$-minors of $X$.

 (b) If $G_1$ is  the graph consisting of exactly one edge, then $J_{G_1,G_2}$  is the binomial edge ideal $J_{G_2}$ introduced in \cite{HHHKR}.

 (c) If $G_1$ is a complete graph, then  $J_{G_1,G_2}$ is the generalized  binomial edge ideal attached to $G_2$, as considered in \cite{RA}.

 (d) If $G_1$ and $G_2$ are line graphs, then $J_{G_1,G_2}$  is the ideal of adjacent $2$-minors of the  matrix $X$, studied in \cite{DES}, \cite{HS} and \cite{HSS}.
 }
 \end{Examples}

 \begin{Theorem}
 \label{radical}
 Let $J_{G_1,G_2}$ be the binomial edge ideal of the pair of graphs $(G_1,G_2)$. Then the following conditions are equivalent:
 \begin{enumerate}
 \item[{\em (a)}] $J_{G_1,G_2}$ is a radical ideal, that is, $J_{G_1,G_2}=\sqrt{J_{G_1,G_2}}$.
 \item[{\em (b)}] $J_{G_1,G_2}$ has a squarefree Gr\"obner basis with respect to the lexicographic order induced by
 \[
 x_{11}> x_{12}> \cdots > x_{1n}> x_{21}> x_{22}> \cdots > x_{mn}.
 \]
 \item[{\em (c)}] Either $G_1$ or $G_2$ is a  complete graph.
 \end{enumerate}
 \end{Theorem}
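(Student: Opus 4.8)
The plan is to prove the cyclic chain $(c)\Rightarrow(b)\Rightarrow(a)\Rightarrow(c)$. The step $(b)\Rightarrow(a)$ is the standard fact that an ideal with a squarefree initial ideal is radical: if $\ini_<(J_{G_1,G_2})$ is a squarefree monomial ideal it equals its own radical, and from $\ini_<(J_{G_1,G_2})\subseteq\ini_<(\sqrt{J_{G_1,G_2}})\subseteq\sqrt{\ini_<(J_{G_1,G_2})}=\ini_<(J_{G_1,G_2})$ one reads off that $J_{G_1,G_2}$ and $\sqrt{J_{G_1,G_2}}$ have the same Hilbert function, hence coincide. For $(c)\Rightarrow(b)$ I would appeal to the known structure theory. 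If $G_1$ is complete then $J_{G_1,G_2}$ is the generalized binomial edge ideal of $G_2$ studied in \cite{RA}, whose Gr\"obner basis for the stated lexicographic order consists of the minors $p_{e,f}$ together with binomials attached to admissible paths of $G_2$, all with squarefree leading term. If instead $G_2$ is complete, transposing $X$ identifies $J_{G_1,G_2}$ with the generalized binomial edge ideal of $G_1$, and the same (transposed) computation yields a squarefree Gr\"obner basis with respect to the row-lexicographic order of~(b).

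The substance of the theorem is $(a)\Rightarrow(c)$, which I would prove by contraposition: assuming $G_1$ and $G_2$ are connected but not complete, I want an explicit element of $\sqrt{J_{G_1,G_2}}\setminus J_{G_1,G_2}$. The combinatorial input is that a connected non-complete graph always contains two non-adjacent vertices with a common neighbour --- otherwise every pair of vertices would be at distance at most $1$, i.e.\ the graph would be complete. So fix $i<j$ non-adjacent in $G_1$ with a common neighbour $a$, and $k<l$ non-adjacent in $G_2$ with a common neighbour $c$, and set
\[
g \;=\; x_{ac}\,p_{\{i,j\},\{k,l\}} \;=\; x_{ac}\,(x_{ik}x_{jl}-x_{il}x_{jk}).
\]

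I would then verify the two facts about $g$. First, $g\in\sqrt{J_{G_1,G_2}}$: since $\{a,i\},\{a,j\}\in E(G_1)$ and $\{c,k\},\{c,l\}\in E(G_2)$, the four minors $p_{\{a,i\},\{c,k\}},\,p_{\{a,i\},\{c,l\}},\,p_{\{a,j\},\{c,k\}},\,p_{\{a,j\},\{c,l\}}$ lie in $J_{G_1,G_2}$, and each of them yields a congruence $x_{ac}x_{rs}\equiv x_{as}x_{rc}$ modulo $J_{G_1,G_2}$ for the relevant $r\in\{i,j\}$, $s\in\{k,l\}$; multiplying these out gives
\[
x_{ac}^{2}\,p_{\{i,j\},\{k,l\}}\;\equiv\;(x_{ak}x_{ic})(x_{al}x_{jc})-(x_{al}x_{ic})(x_{ak}x_{jc})\;=\;0 \pmod{J_{G_1,G_2}},
\]
hence $g^{2}=\bigl(x_{ac}^{2}\,p_{\{i,j\},\{k,l\}}\bigr)\,p_{\{i,j\},\{k,l\}}\in J_{G_1,G_2}$. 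Second, $g\notin J_{G_1,G_2}$: here I would use that $J_{G_1,G_2}$ is homogeneous for the $\ZZ^{m}\times\ZZ^{n}$ matrix grading $\deg x_{rs}=(\epsilon_r,\epsilon_s)$ (with $\epsilon_r$ the $r$-th unit vector), each $p_{e,f}$ being homogeneous of multidegree $(\epsilon_i+\epsilon_j,\epsilon_k+\epsilon_l)$. In a hypothetical multihomogeneous representation $g=\sum h_{e,f}\,p_{e,f}$ of the multidegree $(\epsilon_a+\epsilon_i+\epsilon_j,\ \epsilon_c+\epsilon_k+\epsilon_l)$ of $g$, every $h_{e,f}$ must be a scalar multiple of a single variable and only the generators with $e\subseteq\{a,i,j\}$ and $f\subseteq\{c,k,l\}$ can occur; because $\{i,j\}\notin E(G_1)$ and $\{k,l\}\notin E(G_2)$, these are precisely the four minors above, and then each $h_{e,f}$ is determined up to a scalar. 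Comparing coefficients of the six monomials that occur forces all four of these scalars to vanish yet also requires the sum of two of them to equal $1$, which is impossible. Therefore $J_{G_1,G_2}\subsetneq\sqrt{J_{G_1,G_2}}$, and~(a) fails.

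I expect the one genuine obstacle to be this last non-membership step: one must pin down exactly which terms $h_{e,f}\,p_{e,f}$ are compatible with the multidegree of $g$, and here the two non-edge hypotheses $\{i,j\}\notin E(G_1)$ and $\{k,l\}\notin E(G_2)$ are precisely what leaves only the four minors listed above; after that the verification is a short finite linear-algebra check. (Equivalently, one may first set to zero all variables $x_{rs}$ with $r\notin\{a,i,j\}$ or $s\notin\{c,k,l\}$: this carries $J_{G_1,G_2}$ onto the binomial edge ideal of the pair $(P_3,P_3)$, i.e.\ the ideal of adjacent $2$-minors of a $3\times 3$ matrix, while fixing $g$, so it suffices to settle that one small case.) The remaining points are minor: the appeal to connectedness to produce the common neighbour, and, in $(c)\Rightarrow(b)$, the bookkeeping needed to match the Gr\"obner basis of \cite{RA} with the precise term order of~(b).
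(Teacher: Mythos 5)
Your proposal is correct and follows essentially the same route as the paper: (c)$\Rightarrow$(b) by citing the Gr\"obner basis of \cite{RA}, (b)$\Rightarrow$(a) as the standard squarefree-initial-ideal argument, and (a)$\Rightarrow$(c) by extracting an induced path of length $2$ from each (connected, non-complete) graph and exhibiting an explicit cubic element of $\sqrt{J_{G_1,G_2}}\setminus J_{G_1,G_2}$. The only differences are cosmetic: the paper's witness is the binomial $f_{L_1,L_2}=x_{it}x_{jr}x_{ks}-x_{ir}x_{js}x_{kt}$ rather than your $x_{ac}\,p_{\{i,j\},\{k,l\}}$ (an element of the same multidegree), and the paper settles non-membership by the same specialization to the $3\times 3$ submatrix that you offer as an alternative, while you additionally spell out the multigraded coefficient comparison that the paper leaves implicit.
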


 \begin{proof} The implication (c) \implies (b) is shown in \cite{RA}, and (b)\implies (a) is a general fact, see for example proof of \cite[Corollary 2.2]{HHHKR}. Thus it remains to be shown that (a) implies (c).  Suppose that neither $G_1$ nor $G_2$ is a complete graph. Then there exist subsets $T_1\subset [m]$ and $T_2\subset [n]$ such that the restrictions $L_1= (G_1)_{T_1}$ and $L_2=(G_2)_{T_2}$ are line graphs,  each of them with two edges, say, $E(L_1)= \{\{i,j\},\{j,k\}\}$ and   $E(L_2)= \{\{r,s\}, \{s,t\}\}$. Then the element
 \begin{eqnarray}
 \label{twolines}
 f_{L_1, L_2}=x_{it}x_{jr}x_{ks}-x_{ir}x_{js}x_{kt}
 \end{eqnarray}
 does not belong to the ideal
 \[
 I=(p_{e,f}\:\; e\in E(L_1), f\in E(L_2)),
 \]
and hence $ f_{L_1, L_2} \notin J_{G_1,G_2}$ because $I$ is obtained from $J_{G_1, G_2}$ by substituting all the variables by 0 which do not appear among the generators of $I$. On the other hand $ f^2_{L_1, L_2} \in I$, and hence
$ f^2_{L_1, L_2}\in J_{G_1,G_2}$. This shows that $J_{G_1,G_2}$ is not a radical ideal.
\end{proof}

In \cite{HHHKR} the concept of a closed graph is introduced. Recall that a graph $G$ on the vertex set $[n]$ is called {\em closed} if for all edges   $\{i,j\}$ and $\{k,l\}$ of $G$  with $i<j$ and $k<l$ one has  $\{j,l\}\in E(G)$ if $i=k$, and $\{i,k\}\in E(G)$ if $j=l$.

\medskip
The next result shows that only in exceptional cases the binomial generators of $J_{G_1,G_2}$ form a Gr\"obner basis of $J_{G_1,G_2}$.

\begin{Theorem}
\label{quadratic}
Let $J_{G_1,G_2}$ be the binomial edge ideal of the pair of graphs $(G_1, G_2)$. Then the following conditions are equivalent:
\begin{enumerate}
\item[{\em (a)}] $J_{G_1,G_2}$ has a quadratic Gr\"obner basis with respect to the monomial order introduced in Theorem~\ref{radical}.
\item[{\em (b)}] $G_1$ is complete and $G_2$ is closed, or vice versa.
\end{enumerate}
\end{Theorem}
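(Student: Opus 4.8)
The implication (b)$\Rightarrow$(a) is the easy direction: when $G_1$ is complete and $G_2$ is closed one checks --- via the explicit Gr\"obner basis produced in \cite{RA}, or directly by Buchberger's criterion --- that the binomial generators $p_{e,f}$ already form a Gr\"obner basis, which is quadratic; the other alternative in (b) is obtained by transposing the matrix $X$. So the content of the theorem is (a)$\Rightarrow$(b), and the main tool will be the following reformulation: \emph{$J_{G_1,G_2}$ has a quadratic Gr\"obner basis with respect to the order of Theorem~\ref{radical} if and only if the generators $p_{e,f}$, $e\in E(G_1)$, $f\in E(G_2)$, form a Gr\"obner basis}. Only one direction needs an argument. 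Since $J_{G_1,G_2}$ is homogeneous and generated in degree $2$, its degree-$2$ component is the $K$-span of the $p_{e,f}$; these have pairwise distinct leading monomials, hence are linearly independent, so $\dim_K(J_{G_1,G_2})_2$ equals the number of generators, which equals $\dim_K\langle\,\ini_<(p_{e,f})\,\rangle$. As $(\ini_<(J_{G_1,G_2}))_2$ contains $\langle\,\ini_<(p_{e,f})\,\rangle$ and has the same dimension as $(J_{G_1,G_2})_2$, the two spaces coincide; if moreover the reduced Gr\"obner basis is quadratic then $\ini_<(J_{G_1,G_2})$ is generated in degree $2$, whence $\ini_<(J_{G_1,G_2})=(\ini_<(p_{e,f}))$, which says exactly that the $p_{e,f}$ are a Gr\"obner basis.

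From here I test the $p_{e,f}$ with Buchberger's criterion. An $S$-pair with coprime leading terms reduces to zero automatically, and inspecting the leading terms $\ini_<(p_{e,f})=x_{ik}x_{jl}$ (where $e=\{i,j\}$, $i<j$, and $f=\{k,l\}$, $k<l$) shows that the only non-coprime $S$-pairs $S(p_{e,f},p_{e',f'})$ are those in which $e,e'$ share an endpoint and $f,f'$ share an endpoint. I also record a localization principle: if $e,e'\subseteq T_1\subseteq[m]$ and $f,f'\subseteq T_2\subseteq[n]$, then $S(p_{e,f},p_{e',f'})$ and every polynomial occurring in its reduction involve only the variables $x_{ab}$ with $a\in T_1$, $b\in T_2$, so the reduction takes place already inside $J_{(G_1)_{T_1},(G_2)_{T_2}}$; hence if the generators of this smaller ideal fail Buchberger's test, so do those of $J_{G_1,G_2}$. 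This reduces the whole question to induced subgraphs on at most three rows and three columns.

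Now for (a)$\Rightarrow$(b), assume the $p_{e,f}$ form a Gr\"obner basis. I claim one of $G_1,G_2$ is complete and the other closed; here I use, as in the proof of Theorem~\ref{radical}, that a non-complete graph carries an induced path of length two. First, suppose $G_1$ is not closed, so it has an induced path whose middle vertex is its smallest vertex, or its largest; restricting to these three rows and to two columns forming an edge of $G_2$, a single $S$-polynomial step in each of the two shapes yields a cubic whose leading term --- of the form $x_{12}x_{21}x_{32}$ or $x_{11}x_{22}x_{31}$ after relabelling --- is divisible by no $\ini_<(p_{e,f})$, a contradiction; thus $G_1$ is closed, and symmetrically (restricting to two rows forming an edge of $G_1$ and three columns carrying a bad induced path of $G_2$, which lands inside a classical binomial edge ideal of a non-closed graph, treated in \cite{HHHKR}) $G_2$ is closed. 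Finally, if $G_1$ and $G_2$ were both closed yet neither complete, each would contain an induced $P_3$ on vertices $a<b<c$ with edges $\{a,b\},\{b,c\}$; restricting to those three rows and columns and relabelling them $1,2,3$, the $S$-polynomial of $x_{11}x_{22}-x_{12}x_{21}$ and $x_{22}x_{33}-x_{23}x_{32}$ equals $x_{11}x_{23}x_{32}-x_{12}x_{21}x_{33}$, whose leading term $x_{11}x_{23}x_{32}$ is divisible by none of $x_{11}x_{22}$, $x_{12}x_{23}$, $x_{21}x_{32}$, $x_{22}x_{33}$ --- impossible. Hence one graph, say $G_1$, is complete; since $G_2$ has been shown to be closed, (b) follows.

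The step I expect to be the real obstacle is this case analysis: getting the localization principle precise enough that a failed reduction on a $3\times 3$ (or $3\times 2$) submatrix genuinely obstructs the Gr\"obner property of all of $J_{G_1,G_2}$, and then matching the few combinatorial shapes of a non-closed induced path against the corresponding leading-term bookkeeping. Everything else amounts to a bounded, explicit $S$-polynomial computation.
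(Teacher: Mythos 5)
Your proof is correct, but for the essential implication (a)\,$\Rightarrow$\,(b) it follows a genuinely different route from the paper. The paper observes that a quadratic Gr\"obner basis of $J_{G_1,G_2}$ has squarefree initial terms, hence $J_{G_1,G_2}$ is radical, and then invokes Theorem~\ref{radical} to conclude that one of the graphs is complete; only after that does it run a single $S$-pair computation to force closedness of the other graph. You instead stay entirely inside Gr\"obner basis theory: you first reduce ``quadratic Gr\"obner basis'' to ``the $p_{e,f}$ themselves form a Gr\"obner basis'' by a degree-two Hilbert-function count (a step the paper uses implicitly but never justifies --- your argument via the distinct diagonal leading terms is a worthwhile addition), then show via localized $S$-pair obstructions that \emph{both} graphs must be closed, and finally rule out ``both closed, neither complete'' by the single $3\times3$ $S$-pair $S(x_{11}x_{22}-x_{12}x_{21},\,x_{22}x_{33}-x_{23}x_{32})$, whose leading term $x_{11}x_{23}x_{32}$ has no divisor of the form $x_{ik}x_{jl}$ with $i<j$, $k<l$ coming from an edge pair. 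I checked your explicit leading-term bookkeeping (including that divisors supported outside the chosen $3\times 3$ block cannot occur, and that the induced-ness of the paths kills the remaining candidate divisors) and it goes through. What your route buys is independence from Theorem~\ref{radical}; what it costs is more case analysis. Two small caveats: (i) like the paper's proof, your argument needs $G_1$ and $G_2$ connected (a disconnected closed graph with complete components, e.g.\ two disjoint edges, has no induced path of length two, and the statement itself fails without connectedness); (ii) your transposition shortcut for the ``vice versa'' case of (b)\,$\Rightarrow$\,(a) is slightly glib, since transposing $X$ turns the row-major lexicographic order into the column-major one --- this is harmless because both orders pick out the diagonal terms of the $p_{e,f}$ and the same $S$-pair reductions apply, or one simply runs the symmetric Buchberger check directly as the paper does, but it deserves a sentence.
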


\begin{proof}
(a) \implies (b): Since the quadratic Gr\"obner basis of $J_{G_1,G_2}$ consists of binomials with squarefree terms, it follows that $J_{G_1,G_2}$ is a radical ideal. Therefore, by Theorem~\ref{radical}, one of the graphs must be complete. Let us assume that $G_1$ is complete, and show that $G_2$ is closed. Let $\{i,j\}$ be an edge of $G_1$ and $\{k,l\}$, $\{k,q\}$ be two edges of $G_2$ with $k<l$ and $k<q$. Then the $S$-polynomial $S(f,g)$ for $f=x_{ik} x_{jl} - x_{jk} x_{il}$ and $g=x_{ik} x_{jq} - x_{iq} x_{jk}$ has the initial monomial $x_{jq} x_{jk} x_{il}$, and, since $J_{G_1,G_2}$ has quadratic Gr\"obner basis, we must have the edge $\{l,q\}$ in $G_2$.

(b) \implies (a): Let $p_{e, f}, p_{e', f'} \in J_{G_1, G_2}$. We show that $S(p_{e, f}, p_{e', f'})$ reduces to zero. If the initial terms of $ p_{e, f}, p_{e', f'}$ are coprime, then there is nothing to prove. Let $e=\{i,j\}, f=\{k,l\}, e'=\{i',j'\}, f'=\{k',l'\}$ with $i < j$, $k < l$, $i'<j'$, $k'< l'$. The initial terms of $p_{e, f}, p_{e', f'} $ have a common factor if and only if (1) $i=i'$ and $k=k'$, (2) $j=j'$ and $l=l'$, or (3) $j=i'$ and $l=k'$. It is straightforward to verify in all the three case that $S(p_{e, f}, p_{e', f'})$ reduces to zero. For example, in case (1), if $j < j'$ and $l<l'$ then  completeness of $G_1$ gives $g=\{j,j'\} \in E(G_1)$ and closedness of $G_2$ implies that $h=\{l,l'\} \in E(G_2)$. Then $S(p_{e, f}, p_{e', f'})$ reduces to 0 with respect to $p_{e',h}$ and $p_{g,f}$.
\end{proof}



\section{The minimal prime ideals}

Let $J_{G_1,G_2} \subset K[X]$ be the binomial edge ideal of the pair of graphs $(G_1,G_2)$. Our aim is to describe the minimal prime ideals of $J_{G_1,G_2}$. This will be done in several steps. Throughout this section we will assume that $G_1$ and $G_2$ are both connected.

\begin{Lemma}
\label{novariables}
The ideal of all $2$-minors of $I_2(X)$ is a minimal prime ideal of $J_{G_1,G_2}$, and if $P$ is a minimal prime ideal of $J_{G_1,G_2}$ containing no variable, then $P= I_2(X)$.
\end{Lemma}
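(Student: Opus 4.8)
The plan is to prove both assertions by exploiting the fact that $J_{G_1,G_2} \subseteq I_2(X)$ and that $I_2(X)$ is a prime ideal. First I would recall the classical fact that $I_2(X)$, the ideal of all $2$-minors of a generic matrix, is prime (its quotient is the coordinate ring of the variety of matrices of rank $\leq 1$, which is irreducible). Since every generator $p_{e,f}$ of $J_{G_1,G_2}$ is a $2$-minor of $X$, we have the inclusion $J_{G_1,G_2} \subseteq I_2(X)$, so $I_2(X)$ is a prime ideal \emph{containing} $J_{G_1,G_2}$; thus $I_2(X)$ contains some minimal prime $P$ of $J_{G_1,G_2}$. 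To show $I_2(X)$ is itself minimal, it suffices to show no prime strictly smaller than $I_2(X)$ can contain $J_{G_1,G_2}$ while remaining contained in $I_2(X)$ — but I can get this directly from the second assertion, so I would prove the second statement first and deduce the first as a corollary.

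For the second assertion, let $P$ be a minimal prime of $J_{G_1,G_2}$ containing no variable $x_{ij}$. Pass to the localization at the multiplicative set generated by all the $x_{ij}$, i.e.\ work in $K[X][x_{ij}^{-1} : \text{all } i,j]$; since $P$ contains no variable, $P$ survives in this localization and remains a minimal prime of the extension of $J_{G_1,G_2}$. Over this localized ring, the single relation $p_{e,f} = x_{ik}x_{jl} - x_{il}x_{jk} = 0$ can be rewritten (dividing by the units) as $\frac{x_{ik}}{x_{il}} = \frac{x_{jk}}{x_{jl}}$, i.e.\ the columns $k$ and $l$ of $X$ are proportional on rows $i$ and $j$. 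Here is where connectedness of $G_1$ and $G_2$ enters: using a path in $G_1$ from any row index to any other, and a path in $G_2$ from any column index to any other, one propagates these proportionality relations to conclude that \emph{all} $2$-minors of $X$ vanish in the localized ring modulo the extension of $J_{G_1,G_2}$. More precisely, the extension of $J_{G_1,G_2}$ to the localized ring equals the extension of $I_2(X)$, because the generators $p_{e,f}$ for $e \in E(G_1)$, $f \in E(G_2)$ already generate all of $I_2(X)$ after inverting the variables (a standard fact: inverting one entry, all $2$-minors of $X$ lie in the ideal generated by the $2$-minors through that entry's row and column, and connectedness lets one chain these). Hence $P \supseteq I_2(X)$ in the localized ring; but $I_2(X)$ is prime and $P$ is a minimal prime containing it, forcing $P = I_2(X)$ after contracting back.

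For the first assertion, I would now argue that $I_2(X)$ is minimal over $J_{G_1,G_2}$: any prime $Q$ with $J_{G_1,G_2} \subseteq Q \subseteq I_2(X)$ contains no variable (since $I_2(X)$ contains no variable, as it is generated by the $2$-minors and $X$ is generic), so by the second assertion any minimal prime contained in $Q$ and containing $J_{G_1,G_2}$ equals $I_2(X)$; combined with $Q \subseteq I_2(X)$ this gives $Q = I_2(X)$. Thus $I_2(X)$ is a minimal prime of $J_{G_1,G_2}$.

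The main obstacle I anticipate is making the propagation argument precise: showing that after inverting all the variables, the edges of the \emph{connected} graphs $G_1$ and $G_2$ suffice to generate the full ideal $I_2(X)$ of \emph{all} $2$-minors. The cleanest route is probably the geometric one — over the localized ring, the relations $p_{e,f} = 0$ force the matrix to have rank $1$ (each row, restricted to any two columns joined by an edge of $G_2$, is proportional to a fixed vector; connectedness of $G_2$ extends this to all columns; connectedness of $G_1$ then pins down the proportionality constants across rows) — which is exactly the statement that the extended ideal is the rank-$\leq 1$ ideal $I_2(X)$. I would write this out as a short lemma on the combinatorial propagation of proportionality relations along paths in a connected graph.
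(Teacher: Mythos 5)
Your proposal is correct and follows essentially the same route as the paper: the paper proves $J_{G_1,G_2}:x^\infty=I_2(X)$ for $x=\prod_{i,j}x_{ij}$ by inducting on path lengths in the connected graphs $G_1$ and $G_2$ (using the identity $x_{i_{r-1}k}\delta=x_{ik}\delta_2+x_{jk}\delta_1$), which is exactly your localization-at-all-variables argument together with the propagation lemma you sketch at the end. The remaining deductions (primeness of $I_2(X)$, minimality forcing $P=I_2(X)$) match the paper's as well.
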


\begin{proof}
Let $x=\prod_{i=1,\ldots,m\atop j=1,\ldots n}x_{ij}$. We claim that  $J_{G_1,G_2} \: x^\infty=I_2(X)$. This  will then  imply the assertions of the lemma. Because if $P$ is a minimal  ideal of $J_{G_1,G_2}$  not containing a variable,  then $ J_{G_1,G_2}\subset I_2(X)= J_{G_1,G_2}\: x^\infty\subset P\: x^\infty=P$, and hence $P$ is equal to  $I_2(X)$.

In order to prove the claim, let $\delta =[i,j|k,l]$ be an arbitrary $2$-minor of $X$. We will show that $\delta\in  J_{G_1,G_2} \: x^\infty$. Assuming this we conclude that $I_2(X)\ : x^\infty=  J_{G_1,G_2} \: x^\infty$. However since $I_2(X)$ is a prime ideal, we then have $I_2(X)\: x^\infty=I_2(X)$,  and the claim  is proven.

To see that $\delta \in  J_{G_1,G_2}\: x^\infty$, we observe that there is a path $P_1$ in $G_1$ from $i$ to $j$, that is, a sequence $i=i_0,i_1,\ldots, i_{r-1},i_r=j$ such that
$\{i_s,i_{s+1}\}\in E(G_1)$ for $s=0,\ldots,r-1$.  The number $r$ is called the length of the path. Similarly there exists a path $P_2\: k=k_0,k_1,\ldots, k_{t-1},k_t=l$ in $G_2$ from $k$ to $l$. We will show by induction on $r+t$, that  $\delta \in J_{G_1,G_2}\: x^\infty$. Notice that $r+t\geq 2$. If $r+t=2$, then $\delta \in J_{G_1,G_2}$, and the assertion is trivial. Suppose now that $r+t>2$. We may assume that $r>1$. By applying the induction hypothesis, we have that
$\delta_1=[i,i_{r-1}|k,l]$ and $\delta_2=[i_{r-1},j|k,l]$ belong to $J_{G_1,G_2} \: x^\infty$. Since $x_{i_{r-1}k}\delta=x_{ik}\delta_2+x_{jk}\delta_1$, it follows that  $\delta \in J_{G_1,G_2}\: x^\infty$, as desired.
\end{proof}

\begin{Corollary}
\label{obvious}  $J_{G_1,G_2}$ is a prime ideal if and only if $G_1$ and $G_2$ are complete graphs.
\end{Corollary}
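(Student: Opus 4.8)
\emph{Proof plan.} I would handle the two implications separately. The implication ``$G_1,G_2$ complete $\implies J_{G_1,G_2}$ prime'' is immediate: by Examples~(a) one has $J_{G_1,G_2}=I_2(X)$, and the ideal of $2$-minors of a generic matrix is a classical prime ideal.

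For the converse, the key remark is that a prime ideal is the unique minimal prime over itself. Hence if $J_{G_1,G_2}$ is prime, then by Lemma~\ref{novariables}, which exhibits $I_2(X)$ as one of its minimal primes, we must have $J_{G_1,G_2}=I_2(X)$. So the whole problem reduces to showing that the equality $J_{G_1,G_2}=I_2(X)$ already forces both $G_1$ and $G_2$ to be complete. (One may assume $m,n\ge 2$; otherwise one of the graphs carries no edge, $J_{G_1,G_2}=(0)$, and there is nothing to say.)

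To establish this last point I would argue by contradiction. Suppose, say, that $\{a,b\}$ with $a<b$ is a non-edge of $G_1$; fix any $c<d$ in $[n]$ and consider the $K$-algebra retraction $\pi\colon K[X]\to K[y_1,\dots,y_4]$ that sends $x_{ac},x_{ad},x_{bc},x_{bd}$ to fresh indeterminates $y_1,\dots,y_4$ and every other $x_{ij}$ to $0$. A short check on indices shows that a monomial of a generator $p_{e,f}$ (with $e\in E(G_1)$, $f\in E(G_2)$, $e=\{i,j\}$, $f=\{k,l\}$, $i<j$, $k<l$) survives $\pi$ only when $i,j\in\{a,b\}$ and $k,l\in\{c,d\}$, i.e.\ only when $e=\{a,b\}$, which is impossible. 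Thus $\pi$ annihilates every generator of $J_{G_1,G_2}$, whence $\pi(J_{G_1,G_2})=(0)$; but $\pi$ sends the $2$-minor $[a,b|c,d]=x_{ac}x_{bd}-x_{ad}x_{bc}\in I_2(X)$ to $y_1y_4-y_2y_3\ne 0$, contradicting $J_{G_1,G_2}=I_2(X)$. Hence $G_1$ is complete, and the same argument applied to a pair of columns shows that $G_2$ is complete.

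The main (and essentially only) obstacle is this last step: Lemma~\ref{novariables} by itself only pins down $J_{G_1,G_2}=I_2(X)$, and one still needs to see why that equality is so restrictive. Passing to a $2\times 2$ block of entries via the retraction $\pi$ is the cleanest way to do it; alternatively one can invoke the natural $\ZZ^{m+n}$ (row and column) multigrading of $K[X]$ and compare the graded components of $J_{G_1,G_2}$ and of $I_2(X)$ in the multidegrees occupied by the $2$-minors. (One could also begin instead from Theorem~\ref{radical} to get one graph complete for free, but the reduction via Lemma~\ref{novariables} is more direct.)
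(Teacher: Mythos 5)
Your proof is correct and follows the same route the paper intends: the paper derives this statement as an immediate corollary of Lemma~\ref{novariables} (a prime ideal is its own unique minimal prime, hence $J_{G_1,G_2}=I_2(X)$), and your retraction onto a $2\times 2$ block cleanly supplies the step the paper leaves unwritten, namely that $J_{G_1,G_2}=I_2(X)$ forces both graphs to be complete. No gaps.
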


Next we are going to study minimal prime ideals of $J_{G_1,G_2}$ which contain variables. In this context the following definition turns out to be useful.

\begin{Definition}
{\em A subset $W \subset [m]\times [n]$ is called {\em admissible} with respect to $(G_1,G_2)$ if it satisfies the following property: whenever  $(i,j)\in e\times f\sect W$ for some $e\in E(G_1)$ and some $f\in E(G_2)$, then $\{i\}\times f \subset W$ or $e \times\{j\}\subset W$.}
\end{Definition}

Obviously, the empty set  and the set $[m]\times [n]$ are admissible.

If $e=\{i,j\}\in E(G_1)$ and $f=\{k,l\}\in E(G_2)$, then the sets $\{i\}\times f$, $\{j\}\times f$, $e\times\{k\}$ and $e\times \{l\}$ are called the {\em edges} of $e\times f$. An admissible set $W$ with respect to $(G_1,G_2)$ is characterized by the property that if $W\sect (e\times f) \neq \emptyset$, then one  of  the edges of $e\times f$ is contained in $W$.

The significance of admissible sets  for the study of the minimal prime ideals of $J_{G_1,G_2}$ becomes apparent  by the next result.

\begin{Lemma}
\label{admissible}
Let $P$ be a prime ideal containing $J_{G_1,G_2}$, and let $W=\{(i,j)\:\; x_{ij}\in P\}$. Then $W$ is an admissible set.
\end{Lemma}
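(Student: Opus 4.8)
The plan is to verify the admissibility condition directly from the definition by a local argument. Suppose $(i,j) \in (e \times f) \cap W$ for some $e = \{i,a\} \in E(G_1)$ and $f = \{j,b\} \in E(G_2)$; I want to conclude that $\{i\} \times f \subset W$ or $e \times \{j\} \subset W$, i.e., that either both $x_{ij}$ and $x_{ib}$ lie in $P$, or both $x_{ij}$ and $x_{aj}$ lie in $P$. By hypothesis $x_{ij} \in P$ already. The key observation is that the generator $p_{e,f} = x_{ij}x_{ab} - x_{ib}x_{aj}$ lies in $J_{G_1,G_2} \subset P$. Since $x_{ij} x_{ab} \in P$ and $P$ is prime, we get $x_{ib} x_{aj} \in P$, hence $x_{ib} \in P$ or $x_{aj} \in P$. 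In the first case $\{i\} \times f = \{(i,j),(i,b)\} \subset W$; in the second case $e \times \{j\} = \{(i,j),(a,j)\} \subset W$. Either way the required condition holds.

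I would write this out as follows: fix $e = \{i_1, i_2\} \in E(G_1)$ and $f = \{j_1, j_2\} \in E(G_2)$ and suppose $(i,j) \in (e \times f) \cap W$, so $i \in \{i_1,i_2\}$, $j \in \{j_1,j_2\}$, and $x_{ij} \in P$. Let $i'$ be the element of $\{i_1,i_2\} \setminus \{i\}$ and $j'$ the element of $\{j_1,j_2\} \setminus \{j\}$. Up to sign, $p_{e,f} = x_{ij} x_{i'j'} - x_{ij'} x_{i'j} \in J_{G_1,G_2} \subset P$, so $x_{ij'} x_{i'j} = x_{ij}x_{i'j'} - p_{e,f} \in P$. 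Since $P$ is prime, $x_{ij'} \in P$ or $x_{i'j} \in P$. In the former case $\{(i,j),(i,j')\} = \{i\} \times f \subset W$; in the latter $\{(i,j),(i',j)\} = e \times \{j\} \subset W$. This is exactly the admissibility property, so $W$ is admissible.

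There is essentially no obstacle here — the whole content is the primeness of $P$ together with the single binomial relation $p_{e,f} \in P$. The only point requiring a small amount of care is bookkeeping with the orderings: the generator $p_{e,f}$ was defined with $i<j$ and $k<l$, so to handle the case where, say, $i$ is the larger vertex of $e$, one notes that $p_{e,f}$ and the minor $[i,i'|j,j']$ differ only by a sign (the sign coming from the permutation of rows and/or columns), hence generate the same ideal; thus the product relation $x_{ij}x_{i'j'} \equiv x_{ij'}x_{i'j} \pmod{P}$ holds regardless of which of $i,i'$ and which of $j,j'$ is larger. Once this is observed, the argument above applies verbatim in every case.
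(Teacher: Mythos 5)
Your proof is correct and is essentially identical to the paper's argument: both use the generator $p_{e,f}\in J_{G_1,G_2}\subset P$ together with $x_{ij}\in P$ to deduce $x_{ij'}x_{i'j}\in P$, and then invoke primeness of $P$. The extra remark about signs and orderings is a harmless bit of bookkeeping that the paper leaves implicit.
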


\begin{proof}
Let $(i,j) \in W$. Then $x_{ij} \in P$.  Assume $(i,j) \in e \times f$, with $e=\{i,k\}$ and $f=\{j,l\}$. Then $x_{ij} x_{kl} - x_{il} x_{kj} \in J_{G_1,G_2} \subset P$. This implies $x_{il} x_{kj} \in P$. Since $P$ is prime, we either have $x_{il} \in P$, or $x_{kj} \in P$. If $x_{il} \in P$, then $\{i\}\times f \subset W$. Otherwise, we have $x_{kj} \in P$, and then $e \times\{j\}\subset W$.
\end{proof}

We call a subset $E\subset E(G_1)\times E(G_2)$  {\em connected}, if for all $e\times f$ and $e'\times f'$  in $E$ there exist  $e_i\times f_i\in E$, $i=1,\ldots,r$ such that $e_i\times f_i =e_1\times f_1$, $e'\times f' =e_r\times f_r$ and $(e_i\times f_i) \sect (e_{i+1}\times f_{i+1})\neq  \emptyset$ for $i=1,\ldots,r-1$.

An arbitrary subset $E\subset E(G_1)\times E(G_2)$   can be uniquely written as a disjoint union of connected subsets of  $E(G_1)\times E(G_2)$, called the {\em connected components} of $E$.

\begin{Lemma}
\label{connected}
Let $W \subset [m]\times [n]$ be an  admissible set with respect to $(G_1,G_2)$. Then the connected components of
\[
W^c=\{e\times f \:\; e\in E(G_1),\; f\in E(G_2),\; W\sect (e\times f)=\emptyset\}.
\]
are of the form $E_1\times E_2$ where $E_1\subset E(G_1)$ and $E_2\subset E(G_2)$.
\end{Lemma}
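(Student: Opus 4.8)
The plan is to show that the set $W^c$ of boxes $e\times f$ disjoint from $W$ is itself a product $E_1\times E_2$ with $E_1\subseteq E(G_1)$, $E_2\subseteq E(G_2)$; once this is established, every connected component of a product set of this shape will again be a product set, because two boxes $e\times f$ and $e'\times f'$ intersect precisely when $e\cap e'\neq\emptyset$ and $f\cap f'\neq\emptyset$, so connectivity in $W^c$ factors as the conjunction of connectivity in the first coordinate and connectivity in the second. Concretely, I would first argue: it suffices to prove that if $e\times f\in W^c$ and $e'\times f'\in W^c$, then the ``mixed'' boxes $e\times f'$ and $e'\times f$ also lie in $W^c$. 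Granting that, given a connected component $E$ of $W^c$, set $E_1=\{e : e\times f\in E \text{ for some } f\}$ and $E_2=\{f : e\times f\in E \text{ for some } e\}$; the mixed-box property shows $E_1\times E_2\subseteq W^c$, and a straightforward path argument shows $E_1\times E_2$ is connected and meets $E$, hence equals $E$.

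The heart of the matter is therefore the mixed-box claim, and here is where the admissibility of $W$ enters. Suppose, for contradiction, that $e\times f\in W^c$ and $e'\times f'\in W^c$ but $e\times f'\notin W^c$, i.e.\ there is a point $(i,j)\in (e\times f')\sect W$. Since $(i,j)\in e\times f'$ with $e\in E(G_1)$ and $f'\in E(G_2)$, admissibility of $W$ forces $\{i\}\times f'\subseteq W$ or $e\times\{j'\}\subseteq W$ where $j'$ is the other endpoint of $f'$; in either case one of these sets lies in $W$. If $e\times\{j''\}\subseteq W$ for either endpoint $j''$ of $f'$, then in particular a point of $e\times f'$ lies in $W$ — but I need to contradict $e\times f\in W^c$ or $e'\times f'\in W^c$, so I must propagate the conclusion into one of the two given boxes. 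The key observation is that $i\in e$ and $i\in e'$-adjacency is not assumed; rather, I should use that $e$ is an edge common to the box $e\times f\in W^c$: if admissibility yields $\{i\}\times f'\subseteq W$, this says nothing about $f$, so instead I exploit the other horn. In fact the clean way is: $(i,j)\in e\times f'\cap W$, and the edges of the box $e\times f'$ are $\{i_0\}\times f'$, $\{i_1\}\times f'$, $e\times\{j_0'\}$, $e\times\{j_1'\}$ (writing $e=\{i_0,i_1\}$, $f'=\{j_0',j_1'\}$); admissibility says one of these four edges lies entirely in $W$. If it is $\{i\}\times f'$ with $i\in\{i_0,i_1\}$, then $\{i\}\times f'\subseteq W$ meets $e'\times f'$ (as $i\in e$... here I use nothing) — so I instead conclude it meets the box whose $G_1$-edge contains $i$ and whose $G_2$-edge is $f'$; the relevant box is $e\times f'$ itself, which is circular. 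The correct reduction: one of the four edges is $e\times\{j_k'\}$; but $e\times\{j_k'\}\subseteq W$ contradicts $e\times f\in W^c$ since $(e\times\{j_k'\})\subseteq (e\times f')$... again I have mixed the wrong coordinate.

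Let me state the real argument: the point $(i,j)\in W$ lies in $e\times f'$. Apply admissibility to the box $e\times f'$: some edge of it is in $W$. That edge is either of the form $e\times\{*\}$ or of the form $\{*\}\times f'$. If it is $e\times\{j^*\}\subseteq W$ for some $j^*$, pick any $k\in f$ (the $G_2$-edge of the \emph{first} given box); then $(i_0,k)$ need not be in $W$, so this does not immediately contradict $e\times f\in W^c$ unless $j^*\in f$. So the argument must be: admissibility gives an edge $\{i^*\}\times f'\subseteq W$ or $e\times\{j^*\}\subseteq W$; in the first case $\{i^*\}\times f'$ intersects $e'\times f'$ only if $i^*\in e'$, which is not known — hence I instead observe $i^*\in e$, so $\{i^*\}\times f'\subseteq e\times f'$, and this set being in $W$ means the box $e\times f'$ already had $W$-intersection, which we assumed. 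The genuine contradiction comes from the \emph{other} direction: we must also use $e'\times f'\in W^c$, i.e.\ $(e'\times f')\sect W=\emptyset$, together with $f'$ being shared. So: the edge of $e\times f'$ that lies in $W$ is $e\times\{j^*\}$ for some endpoint $j^*$ of $f'$ (the case $\{i^*\}\times f'\subseteq W$ would put a point of $e'\times f'$... no).

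\textbf{Summary of the intended logic, stated cleanly.} Write $e=\{i_1,i_2\}$, $f=\{k_1,k_2\}$, $e'=\{i_1',i_2'\}$, $f'=\{k_1',k_2'\}$, and suppose $(i_\alpha,k_\beta')\in W$ for some $\alpha,\beta$. Admissibility applied to $e\times f'$ gives: $\{i_\alpha\}\times f'\subseteq W$ or $e\times\{k_\beta'\}\subseteq W$. In the first subcase, $\{i_\alpha\}\times f'\subseteq W$; since $(i_\alpha,k_\beta)\in e\times f$ we get $(i_\alpha,k_\beta')$... the point is that now consider the box $e\times f$: admissibility is not needed, we directly note that if instead we had chosen to apply admissibility to $e'\times f'$ using a point forced into $W$, we reach $e'\times f'\sect W\neq\emptyset$, a contradiction. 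The clean path: $\{i_\alpha\}\times f'\subseteq W$ forces, for the box $e'\times f'$, the point $(i_\alpha, k_1')$ — not in $e'\times f'$ in general. So the only way to generate a contradiction is the subcase $e\times\{k_\beta'\}\subseteq W$: then $(i_1,k_\beta')$ and $(i_2,k_\beta')$ are in $W$; now apply admissibility to $e\times f$ at... no, $k_\beta'\notin f$ in general.

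Given the difficulty of pinning the combinatorics down without the figure, I record the plan honestly: \emph{the main obstacle is precisely the mixed-box lemma}, whose proof is a short but somewhat delicate case analysis using admissibility of $W$ twice (once on each ``mixed'' box, tracking which of the four edges of the box is forced into $W$ and deriving that one of the two given boxes meets $W$, a contradiction). Once the mixed-box lemma is in hand, the passage to connected components is the routine product-decomposition argument sketched in the first paragraph: set $E_1$, $E_2$ to be the projections of a connected component $E$, show $E_1\times E_2\subseteq W^c$ by the mixed-box lemma, show $E_1\times E_2$ is connected (concatenate a first-coordinate path and a second-coordinate path), and conclude $E=E_1\times E_2$ by maximality of the component.
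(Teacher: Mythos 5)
Your proposal has a genuine gap, and in fact the key intermediate statement you reduce to is false. You propose to show that for \emph{arbitrary} $e\times f$ and $e'\times f'$ in $W^c$ the mixed boxes $e\times f'$ and $e'\times f$ again lie in $W^c$ (equivalently, that $W^c$ itself is a product $E_1\times E_2$), and then to read off the components. Here is a counterexample to that claim. Let $G_1$ be the path $1-2-3-4$ with edges $a_1=\{1,2\}$, $a_2=\{2,3\}$, $a_3=\{3,4\}$, let $G_2$ be the path $1-2-3-4-5$ with edges $c_1,\dots,c_4$, and let
\[
W=\bigl([4]\times\{3\}\bigr)\cup\bigl(\{2\}\times\{1,2\}\bigr)\cup\bigl(\{3\}\times\{4,5\}\bigr).
\]
One checks directly that $W$ is admissible, and that $W^c=\{a_3\times c_1,\; a_1\times c_4\}$: every box with $c_2$ or $c_3$ meets the full column $[4]\times\{3\}$, a box with $c_1$ avoids $W$ only if $2\notin e$, and a box with $c_4$ avoids $W$ only if $3\notin e$. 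Thus $a_3\times c_1$ and $a_1\times c_4$ lie in $W^c$, but neither mixed box does, since $(2,1)\in a_1\times c_1\cap W$ and $(3,4)\in a_3\times c_4\cap W$. The lemma is not contradicted, because these two boxes are disjoint and hence lie in \emph{different} connected components (each a singleton product); but your reduction is to a statement that fails. This also explains why your repeated attempts to close the local admissibility argument kept going in circles: with no hypothesis relating $e$ to $e'$ or $f$ to $f'$, the edge of $e\times f'$ that admissibility forces into $W$ need not meet either of the two given boxes, so no contradiction is available.

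The correct statement is the mixed-box property for two boxes lying in the \emph{same} component, and its proof must use the connecting chain. The paper argues by induction on the length $r$ of a chain $e_1\times f_1,\dots,e_r\times f_r$ in the component $C$ with consecutive boxes intersecting. The induction hypothesis applied to the two subchains of length $r-1$ gives $e_2\times f_r\in C$ and $e_1\times f_{r-1}\in C$. If $e_1\times f_r\notin C$, then (since it would meet $e_2\times f_r$) it must meet $W$ in some point $(i,j)$; admissibility puts $\{i\}\times f_r$ or $e_1\times\{j\}$ inside $W$, and now the overlaps $f_{r-1}\cap f_r\neq\emptyset$ and $e_1\cap e_2\neq\emptyset$ coming from the chain force $W$ to meet $e_1\times f_{r-1}$ or $e_2\times f_r$ respectively, contradicting the induction hypothesis. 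This use of the chain is exactly the ingredient missing from your plan; your final paragraph reducing a component to the product of its projections is fine once the component-wise mixed-box property is in hand, but as written the heart of the proof is absent and the shortcut you propose in its place is not available.
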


\begin{proof}
Let $e\times f$ and $e'\times f'$ belong to the same connected component $C$ of $W^c$. Then there exist  $e_i\times f_i\in C$, $i=1,\ldots,r$ such that $e\times f =e_1\times f_1$, $e'\times f' =e_r\times f_r$ and $(e_i\times f_i )\sect (e_{i+1}\times f_{i+1}) \neq  \emptyset$ for $i=1,\ldots,r-1$.

We have to show that $e\times f'\in C$ and $e'\times f\in C$. We show this by induction on $r$. The assertion is trivial, if $r=1$. Now let $r>1$, and assume the assertion is already shown for $r-1$. Then, since $e_2 \times f_2$ is connected in $C$ to $e_r \times f_r$ by a chain of length $r-1$, the induction hypothesis implies that  $e_2 \times f_r$  belongs to $C$. Similarly, since  $e_{r-1} \times f_{r-1}$ is connected in $C$ to $e_1\times f_1$  by a chain of length $r-1$, we have $e_1 \times f_{r-1}$ in $C$. 
Suppose that $e_1 \times f_r \notin C$. Then $e_1\neq e_2$ and $f_{r-1}\neq f_r$, and moreover  $(e_1 \times f_r) \cap W \neq \emptyset$, say $(i,j) \in (e_1 \times f_r) \cap W$. Since $W$ is admissible, it follows that either $\{i\} \times f_r \in W$ or $e_1 \times \{j\} \in W$. This implies $(e_2 \times f_r )\cap W \neq \emptyset$ or $e_1 \times f_{r-1} \neq \emptyset$. It follows that $e_2 \times f_r \notin C$ or $e_1 \times f_{r-1} \notin C$, a contradiction. Hence we conclude that $e \times f' = e_1 \times f_r \in C$. Similarly, one can show that $e' \times f \in C$.
\end{proof}

Let $W$ be an admissible subset of $G_1 \times G_2$, and let $C_1, \ldots, C_r$ be the connected components of $W^c$ in the graph $G_1 \times G_2$. The set of edges of $G_1 \times G_2$ is defined to be the set
$\{\{\{i,j\}, \{k,l\}\}\:\; \{i,j\} \in E(G_1), \{k,l\} \in E(G_2)\}$.  By Lemma~\ref{connected}, there exists subgraphs $G_{1i} \subset G_1$ and $G_{2i} \subset G_2$ such that $C_i = E(G_{1i}) \times E(G_{2i})$. Since all $C_i$ are connected, it follows that the graphs  $G_{1i}$ and $G_{2i}$ are connected, and
\[
W^c = \bigsqcup_{i} E(G_{1i}) \times E(G_{2i})),
\]
where $\bigsqcup$ denote the disjoint union.

For a graph $G$, we define $\hat{G}$ to be the complete graph on the vertex set $V(G)$. By using this notation, we define
\[
\widehat{W}^c = \bigsqcup_{i} E(\widehat{G}_{1i}) \times E(\widehat{G}_{2i}))
\]

Obviously, the ideal
\[
P_W=(\{x_{ij} : (i,j) \in W \}, Q_W) \quad \text{with}\quad Q_W=( p_{e,f} \; \: \; e \times f \in \widehat{W}^c))
\]
is a prime ideal.

\begin{Proposition} \label{contained}
Let $V$ and $W$ be two admissible sets with respect to $(G_1, G_2)$. Then the following conditions are equivalent:
\begin{enumerate}
\item[{\em (a)}] $P_V \subsetneq P_W$.
\item[{\em (b)}] $V \subsetneq W$ and for all $e \times f \subset \widehat{V}^c \setminus \widehat{W}^c$ an edge of $e \times f$ belongs to $W$.
\end{enumerate}
\end{Proposition}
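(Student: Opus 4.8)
\emph{Plan.} First I would record three small observations that make both implications nearly mechanical. Write $I_V=(x_{ij}\:(i,j)\in V)$, so that $P_V=I_V+Q_V$ (and analogously $P_W=I_W+Q_W$). By Lemma~\ref{connected} every connected component of $W^c$ has the product shape $E(G_{1i})\times E(G_{2i})$, and one checks at once that then $V(G_{1i})\times V(G_{2i})$ is disjoint from $W$; hence every variable occurring in a generator $p_{g,h}$ of $Q_W$ lies outside $W$ (and likewise for $V$). Since $Q_V$ is generated in degree $2$ it contains no linear form, so $x_{ij}\in P_V$ exactly when $(i,j)\in V$; and the same for $W$. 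Secondly, if an edge of $e\times f$ is contained in a set $U$, then $p_{e,f}\in(x_{ij}\:(i,j)\in U)$: this is immediate from the shape $p_{e,f}=x_{ik}x_{jl}-x_{il}x_{jk}$, since each of the four edges of $e\times f$ meets both of its monomials. Thirdly --- the key point --- for any $2$-minor $\delta=[a,b\,|\,c,d]$ of $X$ one has $\delta\in Q_W$ if and only if $\{a,b\}\times\{c,d\}\in\widehat{W}^c$; the ``if'' direction is clear, and for ``only if'' I would apply the $K$-algebra map $K[X]\to K[x_{ac},x_{ad},x_{bc},x_{bd}]$ that sends all other variables to $0$. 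It annihilates every generator $p_{g,h}$ of $Q_W$ with $\{g,h\}\neq\{\{a,b\},\{c,d\}\}$ (then some row index of $g$ avoids $\{a,b\}$, or some column index of $h$ avoids $\{c,d\}$, so both monomials of $p_{g,h}$ carry a killed variable) and sends the remaining generators to $\pm\delta$; so if $\{a,b\}\times\{c,d\}\notin\widehat{W}^c$ the image of $Q_W$ is $(0)$, while $\delta\neq0$, forcing $\delta\notin Q_W$.

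For (b)$\Rightarrow$(a): assuming $V\subsetneq W$ and the edge condition, $V\subseteq W$ gives $I_V\subseteq I_W\subseteq P_W$, so it remains to put each generator $p_{e,f}$ of $Q_V$ (with $e\times f\in\widehat{V}^c$) into $P_W$. If $e\times f\in\widehat{W}^c$ then $p_{e,f}\in Q_W\subseteq P_W$; if $e\times f\subseteq\widehat{V}^c\setminus\widehat{W}^c$, the hypothesis gives an edge of $e\times f$ in $W$, whence $p_{e,f}\in I_W$ by the second observation. Thus $P_V\subseteq P_W$, and the inclusion is strict because any $(i,j)\in W\setminus V$ yields $x_{ij}\in P_W\setminus P_V$ by the first observation.

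For (a)$\Rightarrow$(b): from $P_V\subseteq P_W$ and the first observation, comparing the variables lying in each prime gives $V\subseteq W$, and $V=W$ would force $P_V=P_W$, so $V\subsetneq W$. For the edge condition, fix $e\times f=\{a,b\}\times\{c,d\}\in\widehat{V}^c\setminus\widehat{W}^c$. Since $p_{e,f}$ generates $Q_V\subseteq P_V\subseteq P_W=I_W+Q_W$ and every variable of $Q_W$ avoids $W$, reducing modulo $I_W$ (i.e. setting the $W$-variables to $0$) places $\overline{p_{e,f}}$ into $Q_W$ inside $K[x_{ij}\:(i,j)\notin W]$. Now $\overline{p_{e,f}}$ is a difference of two degree-$2$ monomials, some possibly zero. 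If $W$ meets none of the four cells of $e\times f$, then $\overline{p_{e,f}}=p_{e,f}=[a,b\,|\,c,d]$, and the third observation forces $e\times f\in\widehat{W}^c$, a contradiction. If $W$ meets $e\times f$ but contains no edge of it, then (the cells of $e\times f$ that can lie in $W$ form an independent set in the natural $4$-cycle on them) $\overline{p_{e,f}}$ is a nonzero monomial $x_\alpha x_\beta$ with $\alpha,\beta\notin W$; but $Q_W$ is prime and contains no variable, so $x_\alpha x_\beta\notin Q_W$, again a contradiction. Hence $\overline{p_{e,f}}=0$, meaning $W$ contains a cell from each of the two monomials of $p_{e,f}$; the four possibilities for such a pair of cells are exactly the four edges of $e\times f$, so an edge of $e\times f$ lies in $W$, as required.

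I expect the third observation to be the real obstacle: showing that a $2$-minor not listed among the generators of $Q_W$ cannot nevertheless belong to the ideal $Q_W$. The specialization homomorphism disposes of it cleanly, but one has to verify carefully that it annihilates every other generator $p_{g,h}$ --- which it does by the row/column-index argument above. A secondary technical point, already needed for the first and third observations and for the legitimacy of reducing modulo $I_W$, is the bookkeeping that the variable block $V(G_{1i})\times V(G_{2i})$ underlying $Q_W$ is disjoint from $W$; this rests on the product form of the components of $W^c$ supplied by Lemma~\ref{connected}.
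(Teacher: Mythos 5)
Your proof is correct and follows essentially the same route as the paper's: compare the variables lying in each prime to get $V\subsetneq W$, and then analyse where a generator $p_{e,f}$ with $e\times f\in\widehat{V}^c\setminus\widehat{W}^c$ can land inside $P_W=(x_{ij}:(i,j)\in W)+Q_W$. The only difference is that you explicitly verify the two facts the paper uses without comment --- that $x_{ij}\in P_W$ forces $(i,j)\in W$, and that $p_{e,f}\notin Q_W$ whenever $e\times f\notin\widehat{W}^c$ (your specialization homomorphism) --- which is a welcome filling-in of detail rather than a genuinely different argument.
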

\begin{proof}
 (a) $\Rightarrow $ (b): Let $(i,j) \in V$. Then $x_{ij} \in P_V \subset P_W$. This implies that $(i,j) \in W$. Therefore, $V \subset W$. The inclusion must be proper, otherwise $P_V = P_W$. Assume that $e \times f \subset \widehat{V}^c \setminus \widehat{W}^c$. Then $p_{e,f} \in Q_V \setminus Q_W$. This implies $p_{e,f} \in P_W \setminus Q_W$. Therefore, some corner of $e \times f$ belongs to $W$. Since $P_W$ is a prime ideal, an edge of $e \times f$ belongs to $W$.

 (b) $\Rightarrow $ (a): The inclusion $V \subsetneq W$  implies that $\{x_{ij} : (i,j) \in V \}\subsetneq \{x_{ij} : (i,j) \in W \}$. If there  exist $p_{e,f} \in Q_V \setminus Q_W$, then   $e \times f \subset \widehat{V}^c \setminus \widehat{W}^c$. By our assumption, this implies that an edge of $e \times f$ belongs to $W$. Therefore $p_{e,f} \in (x_{ij}, \{i,j\} \in W)$.  This shows that  $P_V \subsetneq P_W$.
\end{proof}

\begin{Theorem} \label{minprimes}
{\em (a)} Let $P$ be a minimal  prime ideal of the binomial edge ideal $J_{G_1,G_2}$ of the pair $(G_1 , G_2)$. Then there exists an  admissible set $W \subset G_1 \times G_2$ such that $P=P_W$.

{\em (b)} Let $W \subset G_1 \times G_2$  be an admissible set. Then $P_W$ is a minimal prime ideal of $J_{G_1,G_2}$, if and only if for any admissible set $V \subset G_1 \times G_2$  properly contained in $W$ there exists $e\times f\in \widehat{V}^c\setminus \widehat{W}^c$ such that no edge of $e\times f$ belongs to $W$.
\end{Theorem}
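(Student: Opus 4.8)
The strategy is to combine the preceding lemmas: Lemma~\ref{admissible} tells us that the variables sitting inside any prime over $J_{G_1,G_2}$ form an admissible set, while Lemma~\ref{novariables} handles the prime $I_2(X)$ containing no variables (which is $P_\emptyset$ in the present notation, since $\emptyset^c = E(G_1)\times E(G_2)$). So for part~(a) I would start with a minimal prime $P\supset J_{G_1,G_2}$, set $W=\{(i,j)\colon x_{ij}\in P\}$, and invoke Lemma~\ref{admissible} to see that $W$ is admissible. The key claim is then that $P_W\subset P$; since $P_W$ is prime (as noted right before the Proposition) and contains $J_{G_1,G_2}$ — one must check $p_{e,f}\in P_W$ for every $e\in E(G_1), f\in E(G_2)$, which holds because either $e\times f\subset\widehat{W}^c$ (so $p_{e,f}\in Q_W$) or else $W\cap(e\times f)\neq\emptyset$, forcing an edge of $e\times f$ into $W$ by admissibility, hence $p_{e,f}\in(x_{ij}\colon(i,j)\in W)$. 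Minimality of $P$ then gives $P=P_W$. The containment $P_W\subset P$ itself needs: the generating variables of $P_W$ lie in $P$ by definition of $W$, and each $p_{e,f}$ with $e\times f\in\widehat{W}^c$ lies in $P$ — this is the step that requires an argument, going through a connected component $E(G_{1i})\times E(G_{2i})$ of $W^c$ and showing, by a path/induction argument inside that component exactly as in the proof of Lemma~\ref{novariables}, that all $2$-minors $[i,j\mid k,l]$ with $i,j\in V(G_{1i})$ and $k,l\in V(G_{2i})$ lie in $P$ (using that $P$ is prime to localize at the product of the relevant variables, none of which is in $P$ since their indices avoid $W$ within that component).

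For part~(b), the ``only if'' direction is essentially the contrapositive packaged through Proposition~\ref{contained}: if some admissible $V\subsetneq W$ has the property that for every $e\times f\in\widehat{V}^c\setminus\widehat{W}^c$ an edge of $e\times f$ belongs to $W$, then condition~(b) of Proposition~\ref{contained} holds, so $P_V\subsetneq P_W$; since $J_{G_1,G_2}\subset P_V$ (by the same verification as above, with $V$ in place of $W$), $P_W$ is not minimal. For the ``if'' direction, suppose $P_W$ is not minimal over $J_{G_1,G_2}$; then there is a prime $P$ with $J_{G_1,G_2}\subset P\subsetneq P_W$. By part~(a) applied to a minimal prime below $P_W$ — or directly, by taking $V=\{(i,j)\colon x_{ij}\in P\}$, which is admissible by Lemma~\ref{admissible}, and arguing as in~(a) that $P_V\subset P$, hence $P_V\subsetneq P_W$ — Proposition~\ref{contained} gives $V\subsetneq W$ with every $e\times f\in\widehat{V}^c\setminus\widehat{W}^c$ having an edge in $W$. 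This $V$ violates the stated condition, completing the contrapositive.

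The main obstacle I anticipate is the localization/path argument showing $p_{e,f}\in P$ for all $e\times f\in\widehat{W}^c$, i.e. that $Q_W\subset P$. Within one connected component $C_i=E(G_{1i})\times E(G_{2i})$ of $W^c$, the graphs $G_{1i}$ and $G_{2i}$ are connected (as observed after Lemma~\ref{connected}), so for any two vertices of $G_{1i}$ there is a path in $G_{1i}$, and similarly in $G_{2i}$; the product $y$ of all variables $x_{ij}$ with $(i,j)$ ranging over $V(G_{1i})\times V(G_{2i})$ is a non-zerodivisor modulo $P$ because none of these pairs lies in $W$ (any such pair would force, via admissibility and the fact that the relevant edge of some $e\times f$ lies in $W$, a contradiction with $e\times f\in W^c$ — one should state this carefully), and then the identity $x_{i_{r-1}k}\delta = x_{ik}\delta_2 + x_{jk}\delta_1$ from Lemma~\ref{novariables} lets one induct on path length to pull every $\widehat{G}_{1i}\times\widehat{G}_{2i}$-minor into $P$. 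Getting the bookkeeping right — in particular, that the ``auxiliary'' variables used to clear denominators genuinely avoid $P$ — is where care is needed; everything else is a straightforward assembly of Lemmas~\ref{admissible}, \ref{connected} and Proposition~\ref{contained}.
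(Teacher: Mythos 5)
Your proposal is correct and follows essentially the same route as the paper: Lemma~\ref{admissible} to extract the admissible set $W$, Lemma~\ref{connected} to split $W^c$ into components $E(G_{1i})\times E(G_{2i})$, the saturation/path identity from Lemma~\ref{novariables} to force the full $2$-minor ideal of each component block into $P$, and Proposition~\ref{contained} for part~(b). The only difference is packaging: you verify the sandwich $J_{G_1,G_2}\subset P_W\subset P$ directly, whereas the paper observes that $P$ is minimal over $(\{x_{ij}\colon (i,j)\in W\})+\sum_i J_{G_{1i},G_{2i}}$ and invokes the decomposition of minimal primes of sums of ideals in disjoint variables, applying Lemma~\ref{novariables} to each summand --- both arguments rest on the same ingredients.
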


\begin{proof}
(a) Let   $W=\{(i,j)\:\; x_{ij}\in P\}$. Then  $(\{x_{ij} : (i,j) \in W \}, J_{G_1,G_2})\subset P$,  and
$(\{x_{ij} : (i,j) \in W \}, J_{G_1,G_2})=(\{x_{ij} : (i,j) \in W \}, Q)$, where $Q$ is generated by all minors $p_{e,f}$
such that  $W$ does not contain an edge of  $e\times f$. Hence,  since $W$ is admissible, as is shown in Lemma~\ref{admissible}, it follows that   $Q=(\{p_{e,f}\:\,  e\times f\in \widehat{W}^c\})$. Now we apply Lemma~\ref{connected} and conclude that
$Q=\sum_{i=1}^r J_{G_{1i}, G_{2i}}$, where $C_1,\ldots, C_r$  are the connected components $\widehat{W}^c$ and $C_i=E(G_{1i})\times E(G_{2i})$, as described in Lemma~\ref{connected} and the comments following it.

Thus our discussion so far shows that $P$ is a minimal prime ideal of
\[
Q=(\{x_{ij} : (i,j) \in W \}, \sum_{i=1}^r J_{G_{1i}, G_{2i}}).
\]
Since the  summands  $J_{G_{1i}, G_{2i}}$ in $Q$ are ideals in pairwise  different sets of variables, it follows that $P=(\{x_{ij} : (i,j) \in W \}, \sum_{i=1}^rP_i)$, where  each $P_i$ is a minimal prime ideal of  $J_{G_{1i}, G_{2i}}$.  None of the $P_i$ contains a variable. It follows therefore from Lemma~\ref{novariables} that $P_i=I_2 ((x_{kl})_{ k \in V(G_{1i})\atop l \in V(G_{2i})}$ for $i=1,\ldots,r$, as desired.

(b) follows from Proposition~\ref{contained}.
\end{proof}

Among the minimal prime ideals of $J_{G_1,G_2}$ are those which are only determined by the data of $G_1$, respectively those by $G_2$. To explain this, let $G$ be a finite simple graph on the vertex set $[n]$. A subset $S\subset G$ is said to have the {\em cut point property} if each $i\in S$ is a cut point of the  graph $G_{[n]\setminus S}$. In other words, $S$ has the cut point property, if for all $i\in S$,  the number of connected components of $G_{([n]\setminus S)\union \{i\}}$ is smaller than that of $G_{[n]\setminus S}$.

\begin{Proposition} \label{special}
Let $S_1\subset V(G_1)=[m]$ and $S_2\subset V(G_2)=[n]$ be subsets with the cut point property. Then $W_1=S_1\times [n]$ and $W_2=[m]\times S_2$ are admissible sets and $P_{W_1}$ and $P_{W_2}$ are minimal prime ideals of $J_{G_1,G_2}$.
\end{Proposition}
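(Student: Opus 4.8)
The plan is to verify the three assertions in turn, the last two reducing to the first two via the results already established.

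First I would check that $W_1 = S_1 \times [n]$ is admissible. Take $(i,j) \in e \times f \cap W_1$ with $e = \{i,i'\} \in E(G_1)$ and $f = \{j,j'\} \in E(G_2)$. Since $(i,j) \in W_1$ we have $i \in S_1$. There is nothing to prove unless we can show $\{i\} \times f \subset W_1$, but that is immediate: $\{i\} \times f = \{(i,j),(i,j')\}$ and both second coordinates lie in $[n]$ while $i \in S_1$, so $\{i\}\times f \subset S_1 \times [n] = W_1$. Hence $W_1$ is admissible; by the symmetric argument so is $W_2$. (In fact for $W_1$ and $W_2$ one never needs the second alternative $e\times\{j\}\subset W$, and admissibility holds for \emph{any} $S_1\subset[m]$, not just those with the cut point property — the cut point property is what will be needed for minimality.)

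Next I would identify $P_{W_1}$ concretely and show it contains $J_{G_1,G_2}$, hence contains a minimal prime. The key computation is the description of $W_1^c$: a pair $e\times f$ with $e=\{i,i'\}\in E(G_1)$, $f\in E(G_2)$, lies in $W_1^c$ iff $e\times f$ misses $S_1\times[n]$, i.e.\ iff $i\notin S_1$ and $i'\notin S_1$, i.e.\ iff $e$ is an edge of the restricted graph $(G_1)_{[m]\setminus S_1}$. Thus $W_1^c = E((G_1)_{[m]\setminus S_1}) \times E(G_2)$, and by Lemma~\ref{connected} its connected components are $E((G_1)_i)\times E(G_2)$ where the $(G_1)_i$ are the connected components of $(G_1)_{[m]\setminus S_1}$ (here using that $G_2$ is connected so its edge set is not decomposed). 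Passing to completions, $\widehat{W}_1^c = \bigsqcup_i E(\widehat{(G_1)_i}) \times E(\widehat{G_2})$, so $P_{W_1} = (\{x_{ij} : i\in S_1\}) + \sum_i I_2\big((x_{kl})_{k\in V((G_1)_i),\, l\in[n]}\big)$, a prime ideal by Lemma~\ref{novariables}. That $J_{G_1,G_2}\subset P_{W_1}$ is clear: any generator $p_{e,f}$ with $e=\{i,i'\}$ either has $\{i,i'\}\cap S_1\neq\emptyset$, whence $p_{e,f}$ lies in the variable part, or has $e\in E((G_1)_{[m]\setminus S_1})$, whence $e\times f$ lies in some component and $p_{e,f}\in I_2$ of the corresponding submatrix.

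The main obstacle is minimality: I must show no admissible $V\subsetneq W_1$ has $P_V\subsetneq P_{W_1}$, equivalently (by Theorem~\ref{minprimes}(b)) that for every admissible $V\subsetneq W_1$ there is some $e\times f\in\widehat V^c\setminus\widehat{W}_1^c$ no edge of which lies in $W_1$. This is where the cut point property enters. Since $V\subsetneq W_1 = S_1\times[n]$, there is a row index $i_0\in S_1$ and a column $j_0$ with $(i_0,j_0)\notin V$; since $V$ is admissible one can in fact arrange, using that $G_2$ is connected, that an entire row $\{i_0\}\times[n]$ is disjoint from $V$ — indeed if $(i_0,j_0)\notin V$ then walking along edges $f$ of $G_2$ out of $j_0$, admissibility of $V$ at a pair $(i_0,j)\notin V$ forces either the $i_0$-row-segment or an $e\times\{j\}$ into $V$; one chases this to see the whole row $i_0$ avoids $V$ (this bookkeeping is the one genuinely fiddly point and I would isolate it as a sublemma). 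Now because $i_0$ is a cut point of $(G_1)_{[m]\setminus(S_1\setminus\{i_0\})}$ — this is precisely the cut point property of $S_1$ applied at $i_0$ — there exist neighbors $a,b$ of $i_0$ in that graph lying in different connected components of $(G_1)_{[m]\setminus S_1}$, so that $e:=\{a,i_0\}$ and $e':=\{i_0,b\}$ together connect across. Pick any edge $f\in E(G_2)$. The pair $e\times f$ (and $e'\times f$) meets $V$ only possibly through its edges; since the $i_0$-row avoids $V$, the edges $\{i_0\}\times f$, $\{a\}\times f$, $\{b\}\times f$ and the segments $e\times\{j\}$, $e'\times\{j\}$ — I need that none of these is forced into $V$, which follows from the admissibility analysis of $V$ together with $e,e'$ linking distinct components of $W_1^c$ — so $e\times f\in\widehat V^c$. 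On the other hand $e\times f\notin\widehat{W}_1^c$ because $e$ spans an edge between two distinct completed components $\widehat{(G_1)_s}$ (the components containing $a$ and $b$). Thus the witness $e\times f$ required by Theorem~\ref{minprimes}(b) exists, so $P_{W_1}$ is minimal, and the argument for $P_{W_2}$ is symmetric. I expect the delicate part to be pinning down exactly which edges of $e\times f$ and $e'\times f$ can land in $V$ and confirming the cut point property supplies a pair avoiding all of them; everything else is routine.
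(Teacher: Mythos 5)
Your setup is sound: the admissibility of $W_1$, the identification of $P_{W_1}$ and its containment of $J_{G_1,G_2}$, and the reduction of minimality to the criterion of Theorem~\ref{minprimes}(b) all match what is needed. Your sublemma that an admissible $V\subsetneq W_1$ must miss an entire row $\{i_0\}\times[n]$ (equivalently, that $V=T\times[n]$ for some $T\subsetneq S_1$, using that each $i_0\in S_1$ has a neighbour outside $S_1$ and that $G_2$ is connected) is correct, and is in fact a step the paper passes over silently. The cut point property is also invoked at the right moment, to produce neighbours $a,b$ of $i_0$ lying in distinct connected components of $(G_1)_{[m]\setminus S_1}$.

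However, the witness you then produce does not satisfy the condition of Theorem~\ref{minprimes}(b). That condition asks for an element $e\times f\in\widehat{V}^c\setminus\widehat{W}_1^c$ \emph{none of whose edges lies in $W_1$} --- not in $V$. Your choice $e=\{a,i_0\}$ contains the vertex $i_0\in S_1$, so the edge $\{i_0\}\times f$ is automatically contained in $W_1=S_1\times[n]$; hence $e\times f$ always has an edge in $W_1$ and can never serve as the required witness (by Proposition~\ref{contained}, such a pair is consistent with $P_V\subsetneq P_{W_1}$ rather than ruling it out). You spend your effort verifying that the edges of $e\times f$ avoid $V$, which is the wrong container. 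The repair is small but essential: use the \emph{virtual} edge $\{a,b\}$ instead. Since $a,i_0,b$ is a path in $(G_1)_{[m]\setminus T}$, the vertices $a$ and $b$ lie in the same connected component of $V^c$, so $\{a,b\}\times f\in\widehat{V}^c$ --- this is exactly why the criterion is phrased with $\widehat{V}^c$ rather than $V^c$, a point your argument misses by insisting on genuine edges of $G_1$. Since $a,b$ lie in different components of $(G_1)_{[m]\setminus S_1}$, one has $\{a,b\}\times f\notin\widehat{W}_1^c$, and since $a,b\notin S_1$ no edge of $\{a,b\}\times f$ meets $W_1$. This is essentially the paper's argument, which chooses two vertices of $[m]\setminus S_1$ that are connected after removing only $T$ but disconnected after removing all of $S_1$.
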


\begin{proof}
By symmetry it is enough to show that  $W_1$ is admissible and that $P_{W_1}$ is a minimal prime ideal. The set $W_1$ being admissible is obvious. Now let $V\subset W_1$ be an  admissible set which is a proper subset of $W_1$. Then $V=T\times [n]$ where $T\subset S_1$ is a proper subset of $S$. Since $S$ has the cut point property it follows that $(G_1)_{[n]\setminus T}$ has less connected components than $(G_1)_{[n]\setminus S}$. Let $G$ be a connected component of $(G_1)_{[n]\setminus T}$ which is not a connected component of $(G_1)_{[n]\setminus S}$. Then there exist two vertices  $i,j \in V(G)$ which are not connected in $(G_1)_{[n]\setminus S}$. Therefore, for any $f \in E(G_2)$ the set $\{i,j\} \times f$ is contained in $\widehat{V}^c \setminus \widehat{W}^c$ and does not have any edge in $W$. Thus it follows from Theorem~\ref{minprimes}(b) that $P_{W_1}$ is a minimal prime ideal of $J_{G_1,G_2}$.
\end{proof}

\section{The case $3\times n$}

In this section we aim at describing explicitly the minimal prime ideals of $J_{G_1,G_2}$   in the case that $|V(G_1)|=3$.

 Let $G_1$ be a connected graph on vertex set $[3]$ and $G_2$ be a connected graph on vertex set $[n]$. The graph $G_1$ is either a path graph or a complete graph. In the case of a complete graph the minimal prime ideals are known by \cite{RA}. Here we want to analyze the case when $G_1$ is a line graph with edges $\{1,2\}$ and $\{2,3\}$.

 Let $T$ be any subset of $[n]$, and let $C_1, \ldots C_r$ be the connected components of $(G_2)_{[n]\setminus T}$. Furthermore, let $B$ be a subset of $[r]$. We set
 \begin{eqnarray}
\label{set}
W_{T,B}= ([3] \times T) \cup \bigcup_{j \in B} (\{2 \} \times V(C_j)).
\end{eqnarray}

Note that $W_{T,B}$ is an admissible set with respect to $(G_1, G_2)$. We are going to prove that any admissible set $W$ for which $P_W$ is a minimal prime ideal of $J_{G_1,G_2}$, is of the form $W_{T,B}$, where $T$ and $B$ satisfy some extra conditions.

We first show

\begin{Lemma}
\label{lemma1}
Let $P_W$ be a minimal prime ideal of $J_{G_1, G_2}$. Suppose there exists some $(i,s) \in W$ with $i \in \{1,3\}$ and $s \in [n]$. Then $[3] \times s \subset W$.
\end{Lemma}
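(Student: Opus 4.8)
The plan is to argue by contradiction: suppose $(i,s)\in W$ with $i\in\{1,3\}$, say $i=1$ (the case $i=3$ being symmetric via the graph automorphism swapping vertices $1$ and $3$ of $G_1$), but $[3]\times\{s\}\not\subset W$. Then at least one of $(2,s)$, $(3,s)$ is not in $W$. The goal is to produce a strictly smaller admissible set $V\subsetneq W$ such that every $e\times f\in\widehat V^c\setminus\widehat W^c$ has an edge contained in $W$; by Theorem~\ref{minprimes}(b) this contradicts the minimality of $P_W$. The natural candidate is $V=W\setminus\{(1,s)\}$, and the main task is to check that $V$ is still admissible and that the edge condition of Proposition~\ref{contained}(b) holds.

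First I would verify that $V=W\setminus\{(1,s)\}$ is admissible. The only way admissibility could fail is at a product $e\times f$ with $e=\{1,j\}\in E(G_1)$ (so $j=2$, since the edges of $G_1$ are $\{1,2\}$ and $\{2,3\}$) and $f=\{s,l\}\in E(G_2)$, with $(1,s)\in e\times f$ but $(1,s)$ now removed. Admissibility of $W$ at $(1,s)$ already forced either $\{1\}\times f\subset W$ (i.e. $(1,l)\in W$) or $e\times\{s\}\subset W$ (i.e. $(2,s)\in W$). In the first case I want $e\times\{l\}=\{(1,l),(2,l)\}$ or $\{1\}\times f=\{(1,s),(1,l)\}$ to witness admissibility of $V$ at some remaining point — here I should note that removing only $(1,s)$ cannot destroy the edge $e\times\{l\}$, and that the relevant point of $e\times f$ still in $V$ is covered because $(1,l)\in W$ forces, by admissibility of $W$ applied at $(1,l)$ against any edge through it, enough structure; more directly, since $(1,l)$ is still in $V$ we need $V$'s condition at $(1,l)$, which is inherited from $W$ because no generator incident to $(1,l)$ was touched except possibly $p_{e,f}$ itself, whose witness edge $\{1\}\times f$ would need $(1,s)\in V$ — this is exactly the delicate point. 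So in fact the safe choice forces us into the second case: we must have $(2,s)\in W$, hence by hypothesis $(3,s)\notin W$, and then the edge $e\times\{s\}=\{(1,s),(2,s)\}$ is replaced; here the surviving witness for admissibility of $V$ must come from elsewhere. The cleanest route is to observe that if $(2,s)\in W$ then for the edge $e'=\{2,3\}$ and any $f'=\{s,l'\}\in E(G_2)$, admissibility of $W$ at $(2,s)$ gives $\{2\}\times f'\subset W$ or $e'\times\{s\}\subset W$; combined with $(3,s)\notin W$ this gives structural constraints I can exploit. I expect this bookkeeping — ruling out exactly which configurations of $W$ near column $s$ are compatible with $(2,s)$ or $(3,s)$ being absent — to be the main obstacle, and it is likely where the hypotheses $i\in\{1,3\}$ (endpoints of the path $G_1$) and connectedness of $G_2$ are really used.

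Once admissibility of $V$ is settled, the edge condition is comparatively routine. The sets $\widehat V^c\setminus\widehat W^c$ consist of those $e\times f$ which are "blocked" by $W$ but not by $V$, i.e. whose only intersection with $W$ came through the removed point $(1,s)$; concretely $e=\{1,2\}$ and $f\ni s$ (plus their effect on the complete-graph closures $\widehat G_{1i},\widehat G_{2i}$). For each such $e\times f$ I must exhibit an edge of $e\times f$ inside $W$. Using whichever of the two cases above survived — $(1,l)\in W$ for the relevant $l$, or $(2,s)\in W$ — one of the edges $\{1\}\times f$, $\{2\}\times\{s\}$ lies in $W$; here I would also need to handle the new product edges created inside the enlarged components $\widehat G_{2i}$ after $(1,s)$ is removed, checking they too are covered. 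Assembling these pieces contradicts Theorem~\ref{minprimes}(b), so $P_W$ minimal forces $(2,s),(3,s)\in W$, i.e. $[3]\times\{s\}\subset W$, as claimed.
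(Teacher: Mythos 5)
There is a genuine gap, and it sits exactly where you flag it yourself: the admissibility of your candidate set $V=W\setminus\{(1,s)\}$. Removing a single point of row $1$ can destroy the admissibility witness of \emph{other} points of row $1$ that remain in $V$. Concretely, $W$ may contain $(1,s)$ and $(1,l)$ for some edge $f=\{s,l\}\in E(G_2)$ with $(2,s),(2,l)\notin W$; such a $W$ can be admissible because each of $(1,s),(1,l)$ is covered by the edge $\{1\}\times f$. After deleting $(1,s)$, the surviving point $(1,l)$ lies in $\{1,2\}\times f$ but neither $\{1\}\times f$ nor $\{1,2\}\times\{l\}$ is contained in $V$, so $V$ is not admissible. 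Your plan never resolves this: the passage ending in ``I expect this bookkeeping \dots to be the main obstacle'' is an acknowledgement that the central step is missing, and the case analysis you sketch around $(2,s)\in W$ versus $(2,s)\notin W$ does not converge to a well-defined admissible set. (There is also a smaller imprecision: from $[3]\times\{s\}\not\subset W$ and $(1,s)\in W$ you only get that $(2,s)\notin W$ \emph{or} $(3,s)\notin W$, so the branches must be tracked more carefully than ``we must have $(2,s)\in W$, hence $(3,s)\notin W$.'')

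The paper sidesteps all of this with a single \emph{global} modification rather than a one-point deletion: it sets
$W'=\{(2,r)\:(2,r)\in W\}\cup\bigcup_{[3]\times\{r\}\subset W}[3]\times\{r\}$,
i.e.\ it keeps all of row $2$ and keeps a point of row $1$ or $3$ only when its entire column lies in $W$. Admissibility of $W'$ is then almost automatic: a surviving point of rows $1$ or $3$ carries its whole column, and a surviving point $(2,r)$ whose column is not full must satisfy $\{2\}\times f\subset W$ (here the path structure of $G_1$ is used: if both $\{1,2\}\times\{r\}$ and $\{2,3\}\times\{r\}$ were the witnesses, the column would be full). If your hypothesis ``$(i,s)\in W$, $i\in\{1,3\}$, $[3]\times\{s\}\not\subset W$'' held, then $W'\subsetneq W$, and Proposition~\ref{contained} gives $P_{W'}\subsetneq P_W$ because for $e\times f\in\widehat{W'}^c\setminus\widehat{W}^c$ with $e=\{1,2\}$ one has $\{2\}\times f\not\subset W$, hence $\{1\}\times f\subset W$ by admissibility. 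If you want to rescue your argument, replace the one-point deletion by this simultaneous deletion of all points of rows $1$ and $3$ whose column is incomplete; that is the idea your plan is missing.
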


\begin{proof}
Let
\[
W'=\{(2,r) \; \: \; (2,r) \in W\} \cup \bigcup_{[3] \times \{r\} \subset W} [3] \times \{r\}.
\]
We first show that $W'$ is an admissible set with respect to $(G_1, G_2)$. Let $(i,r) \in e \times f \cap W'$ for some $e \in E(G_1)$ and for some $f \in E(G_2)$. If $[3] \times \{r\} \subset W$, then $[3] \times \{r\} \subset W'$, in particular, $e \times \{r\} \subset W'$. Otherwise, we may assume that $i=2$ and $[3] \times \{r\} \not \subset W$. Then $\{2\} \times f \in W$ because $W$ is admissible, and hence $\{2\} \times f \in W'$. Therefore, $W'$ is admissible.

Assume that $W' \neq W$. We claim that in this case $P_{W'}$ is properly contained in $P_W$, contradicting the assumption that $P_{W}$ is minimal prime ideal. Indeed, $W'$ is proper subset of $W$. Let $e \times f \in \widehat{W}'^c \setminus \widehat{W}^c$. We may assume that $e=\{1,2\}$. Then $\{1\} \times f \subset W$ because $\{2\} \times f \not \subset W$ and $W$ is admissible.
\end{proof}

In the following, we will have to refer to the following operations on graphs. Let $G$ be a graph and $H$ be a subgraph of $G$. Then $G \setminus \{i\}$ denotes the subgraph of $G$ which is obtained by removing the vertex $i$ along with all the edges incident to $i$, and $H \cup \{i\}$ denotes the subgraph of $G$ which is obtained by adding to $H$ the vertex $i$ and all the edges of $G$ which connect $i$ with $H$.

\begin{Lemma} \label{lemma2}
Let $P_W$ be a minimal prime ideal of $J_{G_1,G_2}$, and let $T=\{a \in [n] : [3] \times \{a\} \in W\}$. Then $T$ has the cut point property.
\end{Lemma}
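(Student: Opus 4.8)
The plan is to argue by contradiction: assume $T$ does not have the cut point property, so that some $a \in T$ fails to be a cut point of $(G_2)_{[n]\setminus T}$, meaning $(G_2)_{([n]\setminus T)\cup\{a\}}$ has the same number of connected components as $(G_2)_{[n]\setminus T}$. From $W$ I would produce a strictly smaller admissible set $V \subsetneq W$ for which $P_V \subsetneq P_W$, via Proposition~\ref{contained}; this contradicts the minimality of $P_W$. The natural candidate is $V = W \setminus (\{1,3\}\times\{a\}) = W \setminus ([3]\times\{a\} \setminus \{(2,a)\})$, i.e.\ we try to remove the full column over $a$ except possibly its middle entry. (By Lemma~\ref{lemma1}, every element of $W$ in rows $1$ or $3$ sits in a full column $[3]\times\{s\}$, so the columns indexed by $T$ are exactly the ones carrying rows $1$ and $3$; removing rows $1,3$ over $a$ leaves an admissible set provided $W$ stays admissible after the deletion, which needs to be checked.)

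First I would verify that $V$ is admissible with respect to $(G_1,G_2)$. The only edges $e\times f$ of $G_1\times G_2$ that could be "newly violated" by the deletion are those meeting column $a$ in row $1$ or row $3$; for such an $e\times f$ with $e=\{1,2\}$ (say) and $a\in f$, the element removed is $(1,a)$, and one checks that the relevant edge of $e\times f$ guaranteeing admissibility of $V$ is either $e\times\{a'\}$ (the other endpoint of $f$) or $\{2\}\times f$, both of which survive in $V$ because $a$ is \emph{not} a cut point — so $a$'s neighbors in $G_2$ outside $T$ lie in the same component, and the column structure of $W$ restricted off of $a$ already handled those edges. This is where the cut-point hypothesis enters: it is precisely what prevents column $a$ from being "forced" back into any admissible subset. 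Then I would check $P_V \subsetneq P_W$ using Proposition~\ref{contained}(b): $V\subsetneq W$ is clear (we removed $(1,a)$, which lies in $W$ since $a\in T$), and for each $e\times f \in \widehat{V}^c\setminus\widehat{W}^c$ I must exhibit an edge of $e\times f$ inside $W$ — these $e\times f$ are exactly the ones that became "uncovered" by deleting rows $1,3$ over $a$, and for each such one the third row entry $(2,a)$ together with admissibility of $W$ supplies the required edge of $e\times f$ already contained in $W$.

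The main obstacle I anticipate is the bookkeeping in the admissibility check for $V$ and the verification of condition (b) of Proposition~\ref{contained}: one must carefully track how the connected components of $(G_2)_{[n]\setminus T}$ and of $(G_2)_{[n]\setminus (T\setminus\{a\})}$ differ, and use that ``$a$ not a cut point'' means the component containing $a$ in the latter graph, when $a$ is removed, does not split. A subtle point is that $V$ should be re-expressed in the normal form $W_{T',B'}$ of \eqref{set} with $T' = T\setminus\{a\}$, and the set $B'$ recording which components of $(G_2)_{[n]\setminus T'}$ carry the middle row; one must confirm this is well-defined, i.e.\ that $\{2\}\times V(C_j)$-type pieces of $W$ are not broken by enlarging the deleted column set from $T\setminus\{a\}$ back to $T$ — again this is exactly guaranteed by $a$ not being a cut point. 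Once these combinatorial facts are in place, the contradiction with minimality of $P_W$ is immediate, and the lemma follows.
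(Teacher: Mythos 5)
Your overall strategy --- contradict the minimality of $P_W$ by producing a strictly smaller admissible set and invoking Proposition~\ref{contained} --- is exactly the paper's strategy, but your single uniform candidate $V=W\setminus(\{1,3\}\times\{a\})$ is not admissible in general, and this is a genuine gap rather than bookkeeping. Let $C$ be the connected component of $(G_2)_{[n]\setminus T}$ such that $C\cup\{a\}$ is the component of $(G_2)_{([n]\setminus T)\cup\{a\}}$ containing $a$; the failure of the cut point property at $a$ is used precisely to see that all neighbours of $a$ outside $T$ lie in this one component $C$. Since $W=W_{T,B}$ by Lemma~\ref{lemma1}, either $\{2\}\times V(C)\subset W$ or $W\cap([3]\times V(C))=\emptyset$. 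In the second case your $V$ still contains $(2,a)$ but contains neither $(1,a)$ nor $(2,c)$ for any $c\in V(C)$; so for an edge $f=\{a,c\}$ of $G_2$ with $c\in V(C)$ and $e=\{1,2\}$ we have $(2,a)\in(e\times f)\cap V$, while $e\times\{a\}\not\subset V$ (you deleted $(1,a)$), $\{2\}\times f\not\subset V$ (since $(2,c)\notin W$), and $e\times\{c\}\not\subset V$ (since $c\notin T$ forces $(1,c)\notin W$ by Lemma~\ref{lemma1}). Hence $V$ is not admissible and $P_V$ is not even defined. Your assertion that $\{2\}\times f$ or $e\times\{a'\}$ ``survive in $V$ because $a$ is not a cut point'' is the false step: whether $(2,c)$ lies in $W$ is governed by whether $C$ belongs to $B$, which is independent of the cut point condition. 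A concrete failure: $W=[3]\times\{a\}$ with $a$ not a cut point gives $V=\{(2,a)\}$, which is not admissible.

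The repair is the case distinction the paper makes: put $W'=W\setminus([3]\times\{a\})$ when $W\cap([3]\times V(C))=\emptyset$, and $W'=W\setminus\{(1,a),(3,a)\}$ only when $\{2\}\times V(C)\subset W$. In either case $W'$ is again of the form (\ref{set}) (with $T'=T\setminus\{a\}$), hence admissible, and Proposition~\ref{contained} applies: in the first case every $e\times f\in\widehat{W}'^c\setminus\widehat{W}^c$ has an edge inside $[3]\times\{a\}\subset W$, and in the second case $\widehat{W}'^c=\widehat{W}^c$, so $P_{W'}\subsetneq P_W$ trivially. With that correction the remainder of your argument goes through and coincides with the paper's proof.
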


\begin{proof}
Assume that $T$ does not have the cut point property. Then there exists an element $a \in T$ such that $(G_2)_{[n]\setminus T}$ has same number of connected components as $(G_2)_{([n]/T) \cup \{a\}}$. This implies that  there exists a unique connected  component $D$ of  $(G_2)_{[n]/T \cup \{a\}}$ which contains $a$ and such that $C=D \setminus \{a\}$ is connected. 

We set $W' = W \setminus ( [3] \times \{a\} )$ if $W \cap ([3] \times V(C)) = \emptyset$, otherwise we set $W' = W \setminus \{(1,a), (3,a)\}$. By using Lemma~\ref{lemma1}, it follows that $W'$ is of the form $W_{T,B}$ as described in $(\ref{set})$. Therefore, $W'$ is admissible.

We claim that $P_{W'} \subsetneq P_W$. By using Proposition~\ref{contained}, it is enough to show that for all $e \times f \subset \widehat{W}'^c \setminus \widehat{W}^c$ an edge of $e \times f$ is contained in $W$. In the case when $W'= W \setminus ([3] \times \{a\})$, any $e \times f \subset \widehat{W}'^c \setminus \widehat{W}^c$ has an edge in $[3] \times \{a\}$. In the case when $W'= W \setminus \{(1,a),(3,a)\}$ we have $\widehat{W}'^c = \widehat{W}^c$. Therefore, our claim holds and we obtain a contradiction to the minimality of $P_W$.
\end{proof}

Now we are ready to describe the minimal prime ideals of $J_{G_1,G_2}$.

\begin{Theorem}\label{minprime3timesn}
 Let $W$ be an admissible set with respect to  $(G_1,G_2)$. Then the following conditions are equivalent:
\begin{enumerate}
\item[{\em (a)}] $P_W$ is a minimal prime ideal of $J_{G_1,G_2}$.
\item[{\em (b)}] $W=W_{T,B}$, where $T$ and $B$ satisfy the following conditions:

\; \item[{\em (i)}] $T$ has the cut point property with respect to $G_2$;
\; \item[{\em (ii)}] Let $C_1,\ldots,C_r$ be the connected components of  $(G_2)_{[n]\setminus T}$. Then
\begin{enumerate}
\item[{\em ($\alpha$)}]  $|V(C_j)| \geq 2$ for $j \in  B$;

\item[{\em ($\beta$)}]  for all $k,l \in B$ with $k\neq l$,  $(C_k  \cup C_l) \cup \{a\}$ is disconnected  for all $a \in T$.
\end{enumerate}
\end{enumerate}
\end{Theorem}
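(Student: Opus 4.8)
The plan is to prove the two implications separately, using the description of the minimal primes from Theorem~\ref{minprimes}(b), the comparison criterion of Proposition~\ref{contained}, and Lemmas~\ref{lemma1} and \ref{lemma2}.

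For (a) $\Rightarrow$ (b), assume $P_W$ is a minimal prime. By Lemma~\ref{lemma1}, every $(i,s)\in W$ with $i\in\{1,3\}$ forces $[3]\times\{s\}\subseteq W$; so, putting $T=\{a: [3]\times\{a\}\subseteq W\}$, we get $W=([3]\times T)\cup(\{2\}\times S)$ for some $S\subseteq[n]\setminus T$. The first step is to show that $S$ is a union of vertex sets of connected components of $(G_2)_{[n]\setminus T}$: if $(2,r)\in W$ with $r$ lying in a component $C_j$ that carries an edge $f=\{r,r'\}$, then admissibility of $W$ applied to $\{1,2\}\times f$ forces $\{2\}\times f\subseteq W$ (since $(1,r)\notin W$, as $r\notin T$), and iterating along the connected graph $C_j$ yields $\{2\}\times V(C_j)\subseteq W$; hence $W=W_{T,B}$ for a suitable $B$. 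Condition (i) is exactly Lemma~\ref{lemma2}. It then remains to verify ($\alpha$) and ($\beta$), and for each I will assume it fails and produce an admissible $V\subsetneq W$ with $P_V\subsetneq P_W$, contradicting minimality via Proposition~\ref{contained}.

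If ($\alpha$) fails, some component $C_j$ with $j\in B$ is a single vertex $\{r\}$, so every edge of $G_2$ at $r$ has its other endpoint in $T$. Then $V=W\setminus\{(2,r)\}$ is admissible, and moreover $V^c=W^c$, since each $e\times f$ containing $(2,r)$ also contains some $(i,t)$ with $t\in T$ and hence meets $W$; therefore $Q_V=Q_W$ and $P_V=(\{x_{ij}:(i,j)\in V\},Q_W)\subsetneq P_W$. If ($\beta$) fails, there are $k\neq l$ in $B$ and $a\in T$ with $(C_k\cup C_l)\cup\{a\}$ connected; since $C_k$ and $C_l$ are distinct components joined only through $a$, the vertex $a$ is adjacent in $G_2$ to both. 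The idea is then to dissolve $a$ into the components it meets: one lets $D$ be the component of $(G_2)_{[n]\setminus(T\setminus\{a\})}$ containing $a$, takes $V=W_{T\setminus\{a\},B'}$ where $B'$ is obtained from $B$ by replacing the indices of the components adjacent to $a$ by the index of $D$, and checks that $V\subsetneq W$ and $\widehat{V}^c=\widehat{W}^c$, so that $P_V\subsetneq P_W$. This is the delicate point: when $a$ is also adjacent to a component not in $B$ the set $\{2\}\times V(D)$ need not sit inside $W$, so the comparison has to be set up more carefully, and this is exactly the feature of $(G_2)_{[n]\setminus T}$ that condition ($\beta$) is designed to control.

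For (b) $\Rightarrow$ (a), assume $W=W_{T,B}$ satisfies (i), ($\alpha$), ($\beta$), let $V\subsetneq W$ be an arbitrary admissible set, and, by Theorem~\ref{minprimes}(b), look for $e\times f\in\widehat{V}^c\setminus\widehat{W}^c$ none of whose edges lies in $W$. Fix $(i_0,j_0)\in W\setminus V$ and distinguish cases according to whether it lies in $[3]\times T$ or in $\{2\}\times V(C_j)$ for some $j\in B$ (one must also allow for admissible $V$ that are not themselves of the form $W_{T',B'}$). In each case one exhibits the required $e\times f$, typically with $e=\{1,3\}$ — an edge of $\widehat{G_1}=K_3$ that is not an edge of $G_1$ — and $f$ an edge of a suitable subgraph of $G_2$, arranged so that all four edges of $e\times f$ avoid $W$; here ($\alpha$) guarantees that a $B$-component actually carries an edge to use, while ($\beta$) guarantees that two distinct $B$-components cannot be bridged through a vertex of $T$, which is what prevents $f$ from being contaminated by a variable of $W$. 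The principal obstacle, in both directions, is the bookkeeping: one must keep track of the bipartite adjacency pattern between the vertices of $T$ and the components $C_1,\dots,C_r$ of $(G_2)_{[n]\setminus T}$ and of which of these components belong to $B$, and the argument surrounding ($\beta$) is where this is most intricate.
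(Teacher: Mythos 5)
Your skeleton for (a) $\Rightarrow$ (b) is the paper's: Lemmas~\ref{lemma1} and \ref{lemma2} give $W=W_{T,B}$ with $T$ having the cut point property, and ($\alpha$) is obtained exactly as in the paper by deleting $(2,r)$ from a singleton $B$-component and invoking Proposition~\ref{contained}. The gap is ($\beta$), and you have flagged it yourself without closing it: your ``dissolve $a$'' construction produces a set $V$ that need not be contained in $W$ (and hence cannot be compared to $W$ via Proposition~\ref{contained}, which requires $V\subsetneq W$) precisely when $a$ has a neighbour in a component $C_j$ with $j\notin B$. The paper's own choice here is the simpler $W'=W\setminus\{(1,a),(3,a)\}$; but note that this $W'$ is admissible only when every neighbour of $a$ outside $T$ lies in a $B$-component --- if $a$ is also adjacent to some $C_j$ with $j\notin B$, then $(2,a)\in W'$ together with the edge $f=\{a,b\}$, $b\in V(C_j)$, violates admissibility, since neither $(2,b)$ nor $(1,a)$, $(3,a)$ lie in $W'$. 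So the difficulty you isolate is genuine and is not dispatched by either construction; as written, your argument does not establish ($\beta$) in that configuration, and you must either treat that case separately or show it cannot occur for a minimal $P_W$. Until then, (a) $\Rightarrow$ (b) is incomplete.

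For (b) $\Rightarrow$ (a) your plan is substantially harder than necessary and is not carried out. You propose to test every admissible $V\subsetneq W$ against Theorem~\ref{minprimes}(b), explicitly including admissible sets not of the form $W_{T',B'}$, and no case analysis is actually given. The paper avoids this entirely: if $P_{W_{T,B}}$ is not minimal it properly contains a minimal prime $Q$, and the already-proved implication (a) $\Rightarrow$ (b) forces $Q=P_{W_{T',B'}}$ with $T'\subseteq T$ and $B'\subseteq B$. This leaves exactly two cases. If $T'\subsetneq T$, the cut point property of $T$ produces $a\in T\setminus T'$ joining two components $C_k$, $C_l$, and one takes $e\times\{i,j\}$ with $i\in V(C_k)$, $j\in V(C_l)$; condition ($\beta$) is what guarantees that the edge $\{2\}\times\{i,j\}$ avoids $W_{T,B}$. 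If $T'=T$ and $B'\subsetneq B$, condition ($\alpha$) supplies $i\neq j$ in some $C_k$ with $k\in B\setminus B'$, and $\{1,3\}\times\{i,j\}$ does the job. You should restructure your converse along these lines; the observation that any smaller minimal prime is again of the form $P_{W_{T',B'}}$ is the key reduction your proposal is missing.
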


\begin{proof}
(a) $\Rightarrow$ (b): We know from Lemma~\ref{lemma1} and Lemma~\ref{lemma2} that $W=W_{T,B}$ where $T$ has the cut point property with respect to $G_2$. Suppose $|V(C_j)| =1$ for some $j \in B$, then $C_j = \{a\}$ for some $a\in V(G_2)$, and $W' = W\setminus \{(2,a)\}$ is admissible  with $P_{W'} \subsetneq P_W$, a contradiction. This proves condition $(\alpha)$.

Suppose there exist $a \in T$ such that $(C_k  \cup C_l) \cup \{a\}$ is connected in $G_2$ for some $k,l \in B$ with $k\neq l$. Let $W' = W\setminus \{(1,a) , (3,a)\}$. Then $W'$ is admissible and $P_{W'} \subsetneq P_W$, a contradiction. This proves $(\beta)$.

(b) $\Rightarrow$ (a): Assume that $P_{W_{T,B}}$ is not a minimal prime ideal of $J_{G_1,G_2}$. Then there exist a minimal prime ideal $Q \subsetneq P_{W_{T,B}}$ of $J_{G_1,G_2}$. By the implication (a) \implies (b), which is already shown, it follows that $Q=P_{W_{T',B'}}$ with $T' \subset T$ and $B' \subset B$.
 Suppose $T' \subsetneq T$. Since $T$ has the cut point property, there exist two connected components $C_k, C_l$ of $(G_2)_{[n]\setminus T}$ and $a \in T \setminus T'$ such that $(C_k \cup C_l) \cup \{a\}$ is connected. Let $i \in V(C_k)$ and $j \in V(C_l)$ and $e \in E(G_1)$. Then $ e \times \{i,j\}$ is contained in $\widehat{W}_{T',B'}^c \setminus \widehat{W}^c_{T,B}$. It is clear that the edges $e \times \{i\}$, $e \times \{j\}$ and $\{1\}\times\{i,j\}$, if $e=\{1,2\}$, respectively,  $\{3\}\times\{i,j\}$, if $e=\{2,3\}$,   are not contained in $W_{T,B}$. But also the edge $\{2\} \times \{i,j\}$ is not contained in $W_{T,B}$ because of condition ($\beta$). Therefore, it follows from Proposition~\ref{contained} that  $P_{W_{T',B'}} \nsubseteq P_{W_{T,B}}$, a contradiction. Hence we have $T'=T$. Therefore, we must have $B' \subsetneq B$.
Then there exist $k \in B \setminus B'$ such that $([3] \times V(C_k)) \cap W_{T,B'} = \emptyset$. By condition ($\alpha$) there exist $i,j \in V(C_k)$ with $i \neq j$.  Therefore $\{1,3\} \times  \{i,j\}$ is contained in $\widehat{W}^c_{T,B'} \setminus \widehat{W}^c_{T,B} $ and has no edge in $W_{T,B}$. It again gives a contradiction to our assumption that $P_{W_{T,B'}} \subsetneq P_{W_{T,B}}$.
\end{proof}

In \cite[Theorem 3.1]{HS}, Ho\c{s}ten and Shapiro describe the minimal prime ideals of the ideal of adjacent 2-minors of a $3 \times n$ matrix. In our language, these are the minimal prime ideals of $J_{G_1, G_2}$ where $G_1$ and $G_2$ are line graphs with $|V(G_1)|=3$ and $|V(G_2)|=n$. By using the fact that in this particular case the subsets $T=\{a_1, \ldots, a_r\}$ of $V(G_2)=[n]$ with the cut point property are of the form $1 < a_1$, $a_r < n$ and $a_i < a_{i+1} -1$ for $i= 1 \ldots r-1$, we obtained the result of Ho\c{s}ten and Shapiro as a special case of Theorem~\ref{minprime3timesn}.

In Figure~\ref{minimalp}, we display the admissible sets, marked by fat dots, attached with the minimal prime ideals of $J_{G_1, G_2}$ where $G_1$ is a line graph of length 2 and $G_2$ is graph on vertex set $[5]$ with edge set $\{\{1,2\}, \{2,3\}, \{3,4\}, \{1,4\}, \{4,5\}\}$. 

\begin{figure}[hbt]
\begin{center}
\psset{unit=0.7cm}
\begin{pspicture}(4.5,-3)(4.5,2)

\rput(2,0){
\pspolygon[style=fyp,fillcolor=light](-5,0)(-5,1)(-4,1)(-4,0)
\pspolygon[style=fyp,fillcolor=light](-5,1)(-5,2)(-4,2)(-4,1)
\pspolygon[style=fyp,fillcolor=light](-4,0)(-4,1)(-3,1)(-3,0)
\pspolygon[style=fyp,fillcolor=light](-4,1)(-4,2)(-3,2)(-3,1)
\pspolygon[style=fyp,fillcolor=light](-3,0)(-3,1)(-2,1)(-2,0)
\pspolygon[style=fyp,fillcolor=light](-3,1)(-3,2)(-2,2)(-2,1)
\pspolygon[style=fyp,fillcolor=light](-2,0)(-2,1)(-1,1)(-1,0)
\pspolygon[style=fyp,fillcolor=light](-2,1)(-2,2)(-1,2)(-1,1)
\rput(-5,1){$\bullet$}
\rput(-4,1){$\bullet$}
\rput(-3,1){$\bullet$}
\rput(-2,1){$\bullet$}
\rput(-1,1){$\bullet$}
}

\rput(7,0){
\pspolygon[style=fyp,fillcolor=light](-5,0)(-5,1)(-4,1)(-4,0)
\pspolygon[style=fyp,fillcolor=light](-5,1)(-5,2)(-4,2)(-4,1)
\pspolygon[style=fyp,fillcolor=light](-4,0)(-4,1)(-3,1)(-3,0)
\pspolygon[style=fyp,fillcolor=light](-4,1)(-4,2)(-3,2)(-3,1)
\pspolygon[style=fyp,fillcolor=light](-3,0)(-3,1)(-2,1)(-2,0)
\pspolygon[style=fyp,fillcolor=light](-3,1)(-3,2)(-2,2)(-2,1)
\pspolygon[style=fyp,fillcolor=light](-2,0)(-2,1)(-1,1)(-1,0)
\pspolygon[style=fyp,fillcolor=light](-2,1)(-2,2)(-1,2)(-1,1)
\rput(-4,0){$\bullet$}
\rput(-4,1){$\bullet$}
\rput(-4,2){$\bullet$}
\rput(-2,0){$\bullet$}
\rput(-2,1){$\bullet$}
\rput(-2,2){$\bullet$}
}

\rput(12,0){
\pspolygon[style=fyp,fillcolor=light](-5,0)(-5,1)(-4,1)(-4,0)
\pspolygon[style=fyp,fillcolor=light](-5,1)(-5,2)(-4,2)(-4,1)
\pspolygon[style=fyp,fillcolor=light](-4,0)(-4,1)(-3,1)(-3,0)
\pspolygon[style=fyp,fillcolor=light](-4,1)(-4,2)(-3,2)(-3,1)
\pspolygon[style=fyp,fillcolor=light](-3,0)(-3,1)(-2,1)(-2,0)
\pspolygon[style=fyp,fillcolor=light](-3,1)(-3,2)(-2,2)(-2,1)
\pspolygon[style=fyp,fillcolor=light](-2,0)(-2,1)(-1,1)(-1,0)
\pspolygon[style=fyp,fillcolor=light](-2,1)(-2,2)(-1,2)(-1,1)
\rput(-2,0){$\bullet$}
\rput(-2,1){$\bullet$}
\rput(-2,2){$\bullet$}
}

\rput(2,-3){
\pspolygon[style=fyp,fillcolor=light](-5,0)(-5,1)(-4,1)(-4,0)
\pspolygon[style=fyp,fillcolor=light](-5,1)(-5,2)(-4,2)(-4,1)
\pspolygon[style=fyp,fillcolor=light](-4,0)(-4,1)(-3,1)(-3,0)
\pspolygon[style=fyp,fillcolor=light](-4,1)(-4,2)(-3,2)(-3,1)
\pspolygon[style=fyp,fillcolor=light](-3,0)(-3,1)(-2,1)(-2,0)
\pspolygon[style=fyp,fillcolor=light](-3,1)(-3,2)(-2,2)(-2,1)
\pspolygon[style=fyp,fillcolor=light](-2,0)(-2,1)(-1,1)(-1,0)
\pspolygon[style=fyp,fillcolor=light](-2,1)(-2,2)(-1,2)(-1,1)
\rput(-2,0){$\bullet$}
\rput(-2,1){$\bullet$}
\rput(-2,2){$\bullet$}
\rput(-5,1){$\bullet$}
\rput(-4,1){$\bullet$}
\rput(-3,1){$\bullet$}
\rput(-2,1){$\bullet$}
}

\rput(7,-3){
\pspolygon[style=fyp,fillcolor=light](-5,0)(-5,1)(-4,1)(-4,0)
\pspolygon[style=fyp,fillcolor=light](-5,1)(-5,2)(-4,2)(-4,1)
\pspolygon[style=fyp,fillcolor=light](-4,0)(-4,1)(-3,1)(-3,0)
\pspolygon[style=fyp,fillcolor=light](-4,1)(-4,2)(-3,2)(-3,1)
\pspolygon[style=fyp,fillcolor=light](-3,0)(-3,1)(-2,1)(-2,0)
\pspolygon[style=fyp,fillcolor=light](-3,1)(-3,2)(-2,2)(-2,1)
\pspolygon[style=fyp,fillcolor=light](-2,0)(-2,1)(-1,1)(-1,0)
\pspolygon[style=fyp,fillcolor=light](-2,1)(-2,2)(-1,2)(-1,1)
\rput(-5,0){$\bullet$}
\rput(-5,1){$\bullet$}
\rput(-5,2){$\bullet$}
\rput(-3,0){$\bullet$}
\rput(-3,1){$\bullet$}
\rput(-3,2){$\bullet$}
}

\rput(12,-3){
\pspolygon[style=fyp,fillcolor=light](-5,0)(-5,1)(-4,1)(-4,0)
\pspolygon[style=fyp,fillcolor=light](-5,1)(-5,2)(-4,2)(-4,1)
\pspolygon[style=fyp,fillcolor=light](-4,0)(-4,1)(-3,1)(-3,0)
\pspolygon[style=fyp,fillcolor=light](-4,1)(-4,2)(-3,2)(-3,1)
\pspolygon[style=fyp,fillcolor=light](-3,0)(-3,1)(-2,1)(-2,0)
\pspolygon[style=fyp,fillcolor=light](-3,1)(-3,2)(-2,2)(-2,1)
\pspolygon[style=fyp,fillcolor=light](-2,0)(-2,1)(-1,1)(-1,0)
\pspolygon[style=fyp,fillcolor=light](-2,1)(-2,2)(-1,2)(-1,1)
\rput(-5,0){$\bullet$}
\rput(-5,1){$\bullet$}
\rput(-5,2){$\bullet$}
\rput(-3,0){$\bullet$}
\rput(-3,1){$\bullet$}
\rput(-3,2){$\bullet$}
\rput(-2,1){$\bullet$}
\rput(-1,1){$\bullet$}
}

\end{pspicture}
\end{center}
\caption{}\label{minimalp}
\end{figure}

\section{Unmixed binomial ideals of pairs of graphs}

In this section we classify all pairs of graphs $(G_1,G_2)$ such that $J_{G_1,G_2}$ is unmixed, and those for which $J_{G_1,G_2}$ is Cohen--Macaulay,  under the additional assumption that the graphs are closed.

\begin{Proposition}
\label{unmixed}
Let $n\geq m\geq 3$ be integers and let $G_1$ and $G_2$ be connected simple graphs with $V(G_1)=[m]$ and $V(G_2)=[n]$. Then the binomial edge ideal $J_{G_1,G_2}$ is unmixed if and only if $G_1$ is complete and  for all subsets $T\subset [n]$ with the cut point property for $G_2$ one has
\begin{eqnarray}
\label{numerical}
(c(T)-1)(m-1)=|T|.
\end{eqnarray}
\end{Proposition}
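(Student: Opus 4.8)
The plan is to reduce unmixedness to a height count over the minimal primes described in Section~2. By Lemma~\ref{novariables}, $I_2(X)$ is a minimal prime of $J_{G_1,G_2}$, and $\height I_2(X)=(m-1)(n-1)$; since every minimal prime is associated, $J_{G_1,G_2}$ is unmixed only if every minimal prime has height exactly $(m-1)(n-1)$. The workhorse is a height formula for the primes $P_W$: if $W$ is admissible and $\widehat W^c=\bigsqcup_i E(\widehat G_{1i})\times E(\widehat G_{2i})$ is the decomposition of Lemma~\ref{connected}, then $P_W$ is generated by the variables indexed by $W$ together with the ideals of $2$-minors of the submatrices on rows $V(G_{1i})$ and columns $V(G_{2i})$; these summands involve pairwise disjoint sets of variables, all disjoint from $\{x_{ij}:(i,j)\in W\}$ (this is exactly what is checked in the proof of Theorem~\ref{minprimes}), so additivity of codimension over blocks together with the classical formula $\height I_2=(a-1)(b-1)$ for a generic $a\times b$ matrix gives
\[
\height P_W=|W|+\sum_i\bigl(|V(G_{1i})|-1\bigr)\bigl(|V(G_{2i})|-1\bigr),
\]
where the terms coming from components with a one-point vertex set on one side vanish.

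For the forward implication, suppose $J_{G_1,G_2}$ is unmixed. First I would force $G_1$ to be complete. If not, fix non-adjacent vertices $u,w$ of $G_1$ and let $S$ be a minimal $u$--$w$ vertex separator, so that $1\le|S|\le m-2$ and $(G_1)_{[m]\setminus S}$ has $c_1\ge 2$ connected components; since every vertex of a minimal separator has a neighbour in the component of $u$ and in that of $w$, each element of $S$ is a cut point of $(G_1)_{([m]\setminus S)\cup\{s\}}$, so $S$ has the cut point property. Then $P_{S\times[n]}$ is a minimal prime by Proposition~\ref{special}, and substituting $W=S\times[n]$ into the height formula (the blocks being the ideals of $2$-minors of the submatrices on the non-singleton components of $(G_1)_{[m]\setminus S}$ and all $n$ columns) yields $\height P_{S\times[n]}=n|S|+(n-1)(m-|S|-c_1)$. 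Equating with $(m-1)(n-1)$ forces $|S|=(n-1)(c_1-1)\ge n-1\ge m-1$, incompatible with $|S|\le m-2$. Hence $G_1$ is complete. Now for every $T\subseteq[n]$ with the cut point property for $G_2$, Proposition~\ref{special} makes $P_{[m]\times T}$ a minimal prime, and the mirror-image computation gives $\height P_{[m]\times T}=m|T|+(m-1)(n-|T|-c(T))$; equating with $(m-1)(n-1)$ gives precisely $(c(T)-1)(m-1)=|T|$, i.e.\ $(\ref{numerical})$.

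For the converse, assume $G_1$ complete and $(\ref{numerical})$ for all subsets $T$ with the cut point property. Since $G_1$ is complete, $J_{G_1,G_2}$ is radical by Theorem~\ref{radical}, hence equal to the intersection of its minimal primes and free of embedded primes; so it suffices to show every minimal prime has height $(m-1)(n-1)$. The structural point I would invoke is that, when $G_1$ is complete, every minimal prime of $J_{G_1,G_2}$ has the form $P_{[m]\times T}$ for some $T\subseteq[n]$ with the cut point property for $G_2$ — this is the description of the minimal primes of the generalized binomial edge ideal from \cite{RA}, and it can alternatively be deduced from Theorem~\ref{minprimes}(b) by showing that an admissible set whose rows disagree outside the set of fully occupied columns properly contains a smaller admissible set giving a smaller prime. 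Granting this, the height formula together with $(\ref{numerical})$ give $\height P_{[m]\times T}=m|T|+(m-1)(n-|T|-c(T))=(m-1)(n-1)$ for every minimal prime, so $J_{G_1,G_2}$ is unmixed.

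The main obstacle is this structural input in the converse: ruling out "ragged" admissible sets (rows disagreeing outside the fully occupied columns) as minimal primes when $G_1$ is complete, which is where the real combinatorics sits. Everything else is either quoted from Section~2 or a routine manipulation of the exponents in the height formula, using only the standard facts on codimension of determinantal ideals and its additivity over disjoint blocks of variables.
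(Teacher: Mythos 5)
Your proposal is correct and follows essentially the same route as the paper: Proposition~\ref{special} plus the additive height formula for $P_{S\times[n]}$ and $P_{[m]\times T}$ to force $G_1$ complete and derive (\ref{numerical}), and the description from \cite{RA} of the minimal primes when $G_1$ is complete for the converse. Your explicit use of a minimal vertex separator to produce a nonempty cut-point set when $G_1$ is not complete, and the remark on radicality to exclude embedded primes, are welcome refinements of steps the paper leaves implicit, but not a different argument.
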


\begin{proof}
Assume that $J_{G_1,G_2}$ is unmixed and let us suppose  that $G_1$ is not complete. Since $I_2(X)$ is one of the minimal primes of $J_{G_1,G_2}$ with height $(m-1)(n-1)$, all the other minimal prime ideals of $J_{G_1,G_2}$ must have the same height. By  Proposition~\ref{special}, any prime ideal  $P_W,$ where  $W =S \times [n]$ and $\emptyset\neq S \subset [m]$ has the cut point property for $G_1,$ is a minimal prime of $J_{G_1,G_2}$. Let $G'_1, \ldots, G'_{c(S)}$ be the connected components of $(G_1)_{([m] \setminus S)}$, and  $g_i = |V(G'_i)|$ for $i=1,\ldots,c(s)$. Then $\sum_{i=1}^{c(S)} g_i= m-|S|$ and
\[
\height P_W = n|S| + \sum_{i=1}^{c(S)} (g_i - 1) (n-1)=n|S| +(m-|S| - c(S))(n-1).
\]
Hence, since $J_{G_1,G_2}$ is unmixed, we get $(c(S)-1)(n-1)=|S|$. Moreover, we have  $|S|\geq n-1\geq m-1.$ But it is obvious that no
$(m-1)$-subset of $[m]$ has the cut point property for $G_1,$ therefore, $G_1$ must be complete. By using  arguments as in the first part of the proof for the graph $G_2$, one gets  condition~(\ref{numerical}).

For the converse, we use a result of \cite{RA} which says that if $G_1$ is complete, then the minimal prime ideals of $J_{G_1,G_2}$ are exactly the prime ideals $P_W$ with $W=[m]\times T$ where  $T\subset [n]$ is  a set with the cut point property for $G_2$.   The numerical condition~(\ref{numerical}) shows that  these prime ideals have all the  same height, hence $J$ is unmixed.
\end{proof}

The above proposition and Theorem~\ref{radical} show, in particular, that an unmixed ideal associated with a pair of graphs is  radical. It is very easy to see that the converse is not true. For instance, one may take $G_1$ the complete graph on $[3]$ and $G_2$ the line graph with the edges $\{1,2\},\{2,3\}$. The ideal $J_{G_1,G_2}$ is radical, by Theorem~\ref{radical}, and it is not unmixed, since its minimal prime ideals have different heights.

Proposition~\ref{unmixed} shows also that $J_{G_1,G_2}$ is not unmixed for any connected graph $G_2$ which has a nonempty set $T$ with the cut point property such that $m-1$ does not divide $|T|.$ In particular, if $G_2$ is a tree, the ideal $J_{G_1,G_2}$ is not unmixed, since we may find subsets $T\subset [n]$ with the cut point property of cardinality $1$. In the next proposition we discuss the unmixedness for the case when $G_2$ is a cycle.

\begin{Proposition}\label{unmixed cycle}
Let $n\geq m\geq 3$ and let $G_1$ be the complete graph on $[m]$ and $G_2$ the cycle on the set $[n].$ Then
$J_{G_1,G_2}$ is unmixed if and only if $m=n=3$ or $n=4,m=3$ or $n=5, m=3.$
\end{Proposition}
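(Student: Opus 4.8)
The plan is to apply Proposition~\ref{unmixed} with $G_2$ the $n$-cycle, so that the unmixedness of $J_{G_1,G_2}$ is equivalent to $G_1$ being complete (which is assumed) together with the numerical condition $(c(T)-1)(m-1)=|T|$ holding for every subset $T\subset[n]$ having the cut point property for the cycle. The first step is therefore to identify, for the $n$-cycle $C_n$, all subsets $T$ with the cut point property and to compute $c(T)$, the number of connected components of $(C_n)_{[n]\setminus T}$. Removing a set of vertices from a cycle breaks it into a union of paths; a vertex $i\in T$ is a cut point of $(C_n)_{[n]\setminus T}$ precisely when its two cycle-neighbours both lie in $[n]\setminus T$ and in different components, i.e. when $i$ is an isolated vertex of $(C_n)_T$ whose deletion would merge two path-components. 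Concretely, $T$ has the cut point property iff $T$ is a nonempty independent set in $C_n$ with the extra feature that no vertex of $T$ is adjacent to another vertex of $T$ (so each element of $T$ is flanked by two vertices outside $T$ lying in distinct arcs). One then checks that for such $T$ of size $t=|T|$, the complement $[n]\setminus T$ consists of exactly $t$ arcs (as long as $T\neq\emptyset$ and $T\neq[n]$, and $n\geq 3$), so $c(T)=t=|T|$.

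Given $c(T)=|T|$, the numerical condition $(c(T)-1)(m-1)=|T|$ becomes $(|T|-1)(m-1)=|T|$, i.e. $|T|(m-2)=m-1$, so $|T|=\frac{m-1}{m-2}$. The next step is to observe that for $m\geq 3$ this forces $m-2\mid m-1$, hence $m-2\mid 1$, hence $m=3$, in which case $|T|=2$ is forced. So if $m\geq 4$ the ideal is never unmixed (one can always find a $T$ with the cut point property of size $1$, e.g. a single vertex all of whose neighbours are outside $T$, which exists as soon as $n\geq 3$, and then $(1-1)(m-1)=0\neq 1$). This disposes of all cases with $m\geq 4$, leaving only $m=3$.

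It remains to treat $m=3$: here unmixedness holds iff \emph{every} subset $T\subset[n]$ with the cut point property for $C_n$ has $|T|=2$. The step is to enumerate, for each $n$, the possible sizes of cut-point subsets of $C_n$. A single vertex $\{i\}$ has the cut point property whenever deleting it from $[n]\setminus\{i\}$ actually reduces the component count — but $(C_n)_{[n]\setminus\{i\}}$ is already a single path, and adding $i$ back reconnects it into the cycle, which is still connected, so the component count does \emph{not} drop; hence no singleton has the cut point property in a cycle. Thus the obstruction size $1$ does not occur. For $n=3$: the only independent sets are singletons and the empty set; since singletons don't qualify, there are no cut-point subsets at all, so the condition is vacuously satisfied and $J_{G_1,G_2}$ is unmixed. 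For $n=4$: the cut-point subsets are the two antipodal pairs $\{1,3\}$ and $\{2,4\}$ (each has complement a disjoint pair of vertices, so $c=2=|T|$), all of size $2$, so unmixed. For $n=5$: the cut-point subsets are exactly the non-adjacent pairs of vertices, each of size $2$, giving $c(T)=2$, and there is no valid set of size $\geq 3$ because any three vertices of $C_5$ include two that are adjacent; hence unmixed. For $n=6$: now $\{1,3,5\}$ is an independent set, each of whose vertices is flanked by two outside vertices in distinct arcs, so it has the cut point property with $|T|=3$, violating the condition; hence not unmixed. For $n\geq 6$ in general one can always exhibit such a $T$ of size $3$ (e.g. $\{1,3,5\}$ when $n\geq 6$), so the condition fails. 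Collecting cases, $J_{G_1,G_2}$ is unmixed exactly when $m=3$ and $n\in\{3,4,5\}$, which is the claim.

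The main obstacle, and the place to be most careful, is the precise characterization of which subsets of a cycle enjoy the cut point property — in particular the subtlety that singletons never do (because removing a single vertex from a cycle leaves a connected path, so no merging occurs), which is exactly why the small cases $n=3,4,5$ survive despite the divisibility obstruction $m-2\mid 1$ only admitting $m=3$. Once that combinatorial analysis is pinned down, the rest is the short arithmetic above plus the case check for $n=3,4,5$ versus $n\geq 6$.
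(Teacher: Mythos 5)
Your overall route is the same as the paper's: reduce to the numerical condition $(c(T)-1)(m-1)=|T|$ of Proposition~\ref{unmixed}, observe that the nonempty subsets of the $n$-cycle with the cut point property are exactly the independent sets of size at least $2$ (for which $c(T)=|T|$), and then do the arithmetic and the small-case check. Your observation that singletons never have the cut point property in a cycle is correct and is exactly what makes $n=3,4,5$ survive; your treatment of $n=3,4,5$ versus $n\geq 6$ agrees with the paper, which rules out $n\geq 6$ via a set with $c(T)=|T|=3$ and then forces $m=3$ from the size-$2$ sets when $n=4,5$.

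There is, however, one step that fails as written, and it contradicts your own (correct) analysis. To dispose of $m\geq 4$ you write that ``one can always find a $T$ with the cut point property of size $1$,'' but two paragraphs later you correctly prove that no singleton has the cut point property in a cycle: removing one vertex leaves a connected path, so putting it back does not decrease the number of components. So the witness you offer for $m\geq 4$ does not exist, and the parenthetical argument $(1-1)(m-1)=0\neq 1$ is vacuous. The conclusion is still true and the repair is immediate: your divisibility computation $|T|(m-2)=m-1$ shows that for $m\geq 4$ \emph{no} nonempty cut point set can satisfy the numerical condition, so it suffices to exhibit one such set; since $n\geq m\geq 4$, the non-adjacent pair $\{1,3\}$ is independent of size $2$ and has the cut point property, giving $(2-1)(m-1)=m-1\neq 2$. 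With that substitution your argument is complete and matches the paper's.
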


\begin{proof}
(i) By Proposition~\ref{unmixed}, $J_{G_1,G_2}$ is unmixed if and only if, for every subset $T\subset [n]$ which has the cut point
property for $G_2$, we have
\begin{equation}\label{numeric}
(c(T)-1)(m-1)=|T|.
\end{equation}
If $n\geq 6,$ there exists subsets $T$ of $[n]$ with the cut point property such that $c(T)=|T|=3.$ Hence, we get $2(m-1)=3,$ which is impossible.
Therefore, for unmixedness  we must restrict to $n=3,4$ or $5.$ If $m=n=3$ the claims are obvious since
$J_{G_1,G_2}$ is the ideal of all $2$-minors of the matrix $X$.

Let $n=4$ and assume that $G_2$ has the edges $\{1,2\},\{2,3\},\{3,4\}$ and $\{4,1\}.$ Then the sets with the cut point property for $G_2$ are
$\emptyset, \{1,3\}$ and $\{2,4\}.$ By using (\ref{numeric}) for a set $T$ with two elements, we get $m-1=2,$ hence $m=3.$ In this case
 all the minimal prime ideals of $J_{G_1,G_2}$ have the same height equal to $6$.

Let $n=5.$ In this case we see again that the nonempty subsets of $[5]$ with the cut point property for $G_2$ are of cardinality $2,$ and, as in the
case $n=4$, we obtain $m=3.$
\end{proof}

\begin{Remark}
{\em By using the computer, one easily sees that, in the hypotheses of the above proposition, $J_{G_1,G_2}$ is Cohen-Macaulay if and only if $m=n=3.$}
\end{Remark}

Closed graphs form an  interesting class of graphs $G_2$ for which one may discuss the unmixedness property. We recall that the collection of cliques of a graph $G$ forms a simplicial complex, called the {\em clique complex} of $G.$ We denote it $\Delta(G)$. In  \cite[Theorem 2.2]{EHH} it is shown that  a graph
$G$ on the vertex set $[n]$ is closed if and only if there exists a labeling of $G$ such that all the facets of $\Delta(G)$ are intervals $[a,b]\subset [n]$. Moreover, if one labels the facets $F_1,\ldots,F_r$ of $\Delta(G)$ such that $\min(F_1)<\min(F_2)<\cdots <\min(F_r),$
then $F_1,\ldots,F_r$ is a leaf order of $\Delta(G).$

\begin{Theorem}\label{Osaka}
Let $n\geq m\geq 3$ be integers, let $G_1$ be the complete graph on $[m]$, and $G_2$ a connected closed graph on $[n]$. The following conditions are equivalent:
\begin{itemize}
	\item [(i)] $J_{G_1,G_2}$ is unmixed.
	\item [(ii)] There exists a leaf order $F_1,\ldots,F_r$ of the facets of $\Delta(G_2)$ such that, for $1\leq i\leq r$ $F_i=[a_i,b_i],$  where $a_i,b_i$ are positive integers with  $a_i< a_{i+1}<b_i< b_{i+1}$ and $b_i-a_{i+1}=m-2$ for $1\leq i\leq r-1.$
\end{itemize} Moreover, in the above conditions,  \[\depth(S/J_{G_1,G_2})= n-(r-2)m+2r-3,\] where $r$ is the number of the facets of the clique complex $\Delta(G_2).$ Consequently, $S/J_{G_1,G_2}$ is  Cohen-Macaulay if and only if $G_2$ is a complete graph.
\end{Theorem}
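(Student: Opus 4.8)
The plan is to establish the equivalence (i) $\Leftrightarrow$ (ii) by translating the unmixedness criterion of Proposition~\ref{unmixed} into the combinatorics of the interval structure of $\Delta(G_2)$, and then to compute $\depth(S/J_{G_1,G_2})$ directly by constructing a convenient filtration or by analyzing the primary decomposition. First I would recall from Proposition~\ref{unmixed} that, since $G_1$ is complete, $J_{G_1,G_2}$ is unmixed if and only if every subset $T\subset[n]$ with the cut point property for $G_2$ satisfies $(c(T)-1)(m-1)=|T|$, where $c(T)$ is the number of connected components of $(G_2)_{[n]\setminus T}$. The key observation is that for a closed graph, the cut points and the way the graph disconnects are completely controlled by the interval facets $F_1,\dots,F_r$ of $\Delta(G_2)$ (labeled so that $\min F_1<\cdots<\min F_r$, which by \cite{EHH} is a leaf order): the minimal sets with the cut point property are exactly the intersections $F_i\cap F_{i+1}=[a_{i+1},b_i]$ (when nonempty) and, more generally, unions of several such ``overlap intervals'' that are pairwise non-adjacent. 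For a single overlap $T=F_i\cap F_{i+1}$ one has $c(T)=2$, so the numerical condition reads $|F_i\cap F_{i+1}|=b_i-a_{i+1}+1=m-1$, i.e. $b_i-a_{i+1}=m-2$; and one checks that once all single overlaps have size exactly $m-1$, every larger admissible $T$ (a disjoint union of $k$ such overlaps) automatically gives $c(T)=k+1$ and $|T|=k(m-1)$, so the condition is satisfied for \emph{all} $T$. This yields (i) $\Leftrightarrow$ (ii); one must also verify that the inequalities $a_i<a_{i+1}<b_i<b_{i+1}$ are forced (the strictness $a_{i+1}<b_i$ is exactly non-emptiness of the overlap, which is needed for $G_2$ connected, and $b_i<b_{i+1}$ follows from the facets being distinct maximal intervals in a leaf order).

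Next, for the depth formula, I would proceed by induction on $r$, the number of facets. When $r=1$, $G_2$ is complete and $J_{G_1,G_2}=I_2(X)$ is the generic determinantal ideal, which is Cohen--Macaulay of codimension $(m-1)(n-1)$, so $\depth(S/J_{G_1,G_2})=mn-(m-1)(n-1)=m+n-1$; plugging $r=1$ into $n-(r-2)m+2r-3$ gives $n+m-1$, consistent. For the inductive step, I would ``peel off'' the last facet $F_r=[a_r,b_r]$: since $F_{r-1},F_r$ overlap in the interval $[a_r,b_{r-1}]$ of size $m-1$, one can set up a short exact sequence relating $S/J_{G_1,G_2}$ to $S/J_{G_1,G_2'}$ (where $G_2'$ has facets $F_1,\dots,F_{r-1}$) and a quotient supported on the variables indexed by columns in $F_r$. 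The standard technique here is the one used for binomial edge ideals of closed graphs in \cite{EHH}: choose the column $b_{r-1}$ (the largest index shared by $F_{r-1}$ and $F_r$), localize/mod out by the appropriate minors, and use a Mayer--Vietoris-type sequence
\[
0\to S/J_{G_1,G_2}\to S/J'\oplus S/J''\to S/(J'+J'')\to 0,
\]
where $J'$ and $J''$ correspond to the two ``halves'' of the graph and $J'+J''$ is again a binomial edge ideal of a pair with strictly fewer facets (or a complete intersection of linear forms over such). Computing depths of the two outer terms by induction and using the depth lemma gives the claimed value; the arithmetic $n-(r-2)m+2r-3 = [n'-(r-3)m+2(r-1)-3] + (\text{contribution of }F_r)$ should check out with $n'=b_{r-1}$ and the overlap of size $m-1$.

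Finally, the Cohen--Macaulayness statement follows by comparing the depth with the dimension: $\dim(S/J_{G_1,G_2}) = mn - \height J_{G_1,G_2} = mn - (m-1)(n-1) = m+n-1$ (the height being that of the minimal prime $I_2(X)$, using unmixedness), and then $S/J_{G_1,G_2}$ is Cohen--Macaulay iff $n-(r-2)m+2r-3 = m+n-1$, i.e. $(r-2)(m-2)=0$. Since $m\geq 3$, this forces $r=2$; but a closed graph with $r=2$ facets $[a_1,b_1],[a_2,b_2]$ satisfying (ii) has $|V(G_2)|=b_2$, and one checks this case against the unmixed condition—actually one needs $r=1$ once the interval constraints $a_1<a_2<b_1<b_2$ with $b_1-a_2=m-2$ are imposed together with $|V(G_2)|\ge |V(G_1)|$; a short case analysis shows $r=2$ is incompatible with Cohen--Macaulayness unless it degenerates, leaving $r=1$, i.e. $G_2$ complete, as claimed.

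\textbf{Main obstacle.} The hardest part will be the inductive computation of the depth: setting up the correct short exact sequence so that \emph{all} terms are again (sums of) binomial edge ideals of pairs of \emph{closed} graphs satisfying the unmixedness hypothesis, and verifying that the depth lemma gives an equality (not just an inequality) at each step. This requires knowing that the relevant $\Tor$ or connecting maps vanish, which in the closed case typically comes from a Gröbner-basis / initial-ideal argument (the initial ideal is squarefree and its Stanley--Reisner complex is shellable or vertex-decomposable); I expect to lean on the explicit quadratic Gröbner basis from Theorem~\ref{quadratic} and a shellability argument for $\ini(J_{G_1,G_2})$ to pin down the depth exactly, rather than trying to prove the exactness of the Mayer--Vietoris sequence by hand.
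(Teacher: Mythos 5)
Your overall architecture for the depth computation matches the paper's: the paper also peels off the last facet via $T_0=[a_r,b_{r-1}]$, splits the minimal primes into those whose cut-point set contains $T_0$ and those that do not, obtains $J=J'\cap J''$ with $J'=J_{G_1,G_2'}$ (where $G_2'$ has $r-1$ facets) and $J''=(\{x_{ij}:(i,j)\in[m]\times T_0\})+J_{G_1,G_2''}$, and runs exactly the Mayer--Vietoris sequence you write down, together with a tensor-product decomposition of $S/J''$ over the two components of $G_2''$. Your closing worry is unfounded, though: no $\Tor$-vanishing or shellability argument is needed, because the Depth Lemma already yields equality whenever the depth of the middle term strictly exceeds that of the right-hand term, and here the inductive values give $\depth(S/J'\oplus S/J'')-\depth(S/(J'+J''))=m-1>0$. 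The detour through initial ideals you anticipate as the ``main obstacle'' is unnecessary.

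Two genuine issues remain. First, for (i)$\Leftrightarrow$(ii) you lean on an unproved classification of all cut-point sets of a closed graph as disjoint unions of ``pairwise non-adjacent'' overlap intervals $[a_{i+1},b_i]$. This is more delicate than it looks: consecutive overlaps can intersect (for $m=3$ the facets $[1,3],[2,4],[3,5]$ give overlaps $[2,3]$ and $[3,4]$ sharing the vertex $3$), so ``non-adjacent'' must be made precise and the classification actually proved. The paper sidesteps this entirely: for (i)$\Rightarrow$(ii) it inducts on $r$, using only the single set $T=[a_r,b_{r-1}]$ (which visibly has the cut point property with $c(T)=2$) to extract $b_{r-1}-a_r+1=m-1$, and then passes to the graph with facets $F_1,\dots,F_{r-1}$; for (ii)$\Rightarrow$(i) it does not verify the numerical condition on every $T$ but instead shows that both intersectands in $J=J'\cap J''$ are unmixed of height $(m-1)(n-1)$, again by induction. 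If you want your more direct route, the overlap-classification lemma must be supplied. Second, your arithmetic in the Cohen--Macaulay step is wrong: $m+n-1=n-(r-2)m+2r-3$ simplifies to $(r-1)m=2r-2$, i.e.\ $(r-1)(m-2)=0$, not $(r-2)(m-2)=0$; since $m\geq 3$ this forces $r=1$ immediately, and the case analysis around $r=2$ that you invoke is both based on the wrong equation and unnecessary.
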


\begin{proof}

By Theorem 2.2 in \cite{EHH}, the clique complex $\Delta(G_2)$ has the facets $F_1,\ldots,F_r$ where each facet is an interval, that is,  $F_i=[a_i,b_i]$
and $1=a_1<a_2<\cdots < a_r\leq b_r=n.$ Since $G_2$ is connected, it follows that $a_{i+1}\leq b_i$ for all $i.$

For (i) $\Rightarrow$ (ii) we proceed by induction on $r.$
Let $T=[a_r,b_{r-1}].$ Then $T$ has the cut point property and $c(T)=2,$ thus, by (\ref{numeric}), we get
\[
b_{r-1}-a_r+1=m-1.
\]
Let $G_2^\prime$ be the graph whose clique complex $\Delta(G_2^\prime)$ has the facets $F_1,\ldots,F_{r-1}$ and let $P_{W^\prime}$ be a minimal prime of $J_{G_1,G_2^\prime}$ where $W^\prime=[m]\times T^\prime,$ with $T^\prime\subset V(G_2^\prime)$ a set with the cut point property for $G_2^\prime.$  Then $b_{r-1}\not\in T^\prime,$ thus, $c_{G_2^\prime}(T^\prime)=c_{G_2}(T^\prime).$ It follows that $T^\prime$ has the cut point property for  $G_2$ as well. Therefore, $T^\prime$ satisfies condition~(\ref{numeric}), so we may apply induction.

For (ii) $\Rightarrow$ (i) and for the formula of the depth we apply again induction on $r.$ For $r=1$ there is nothing to prove since $J_{G_1,G_2}=I_2(X).$ In particular, $S/J_{G_1,G_2}$ is  Cohen-Macaulay  of depth $m+n-1.$

Let now $r>1$ and $G_2$ a closed graph whose clique complex has $r$ facets, $F_1,\ldots,F_r$. For each subset $T$ of $[n]$ with the cut point
property for $G_2$, we denote by $P_T(J)$ the minimal prime ideal of $J=J_{G_1,G_2}$ which corresponds to the admissible set $W=[m]\times T.$ Let
$T_0=[a_r,b_{r-1}]$ and set
\[
J^\prime= \bigcap\limits_{P_T(J)\in \Min(J)\atop T\not\supset T_0}P_T(J), \  J^{\prime\prime}= \bigcap\limits_{P_T(J)\in \Min(J)\atop T\supset
T_0}P_T(J),
\]
where $\Min(J)$ is the set of the minimal prime ideals of $J.$
Then $J=J^\prime \cap J^{\prime\prime}$, hence, in order to prove the unmixedness of $J$ we have to show that $J^\prime$ and $J^{\prime\prime}$ are unmixed of the same height equal to $(m-1)(n-1)$.

We note that $J^\prime=J_{G_1,G_2^\prime}$ where $G_2^\prime$ is obtained from $G_2$ by replacing the facets $F_{r-1}$ and $F_r$ of $\Delta(G_2)$ with the clique on
the set  $[a_{r-1}, n].$ Therefore, $G_2^\prime $ has $r-1$ cliques and $J^\prime$ is unmixed, by induction.  In addition, again by induction, we get
\[
\depth(S/J^\prime)=n-(r-3)m+2r-5.
\]
On the other hand, $J^{\prime\prime}=(\{x_{ij}: (i,j)\in [m]\times T_0\})+J_{G_1,G_2^{\prime\prime}}$ where $G_2^{\prime\prime}$ is the restriction of
$G_2$ to the vertex set $[n]\setminus T_0.$ It follows that $G_2^{\prime\prime}$ has two connected components. let us denote them $H_1$ and $H_2$,
where $H_1$ is given by $r-1$ cliques on the vertex set $[a_r-1]$ and $H_2$ is the clique on the vertex set $[b_{r-1}+1,n]$. Therefore, by the inductive hypothesis, it follows that $J_{G_1,H_1}$ is unmixed of height $(m-1)(a_r-2).$ This implies that every minimal prime of  $J^{\prime\prime}$ has height equal to $m|T_0|+(m-1)(a_r-2)+(m-1)(n-b_{r-1}-1)=(m-1)(n-1),$ thus $J^{\prime\prime}$ is also unmixed. This ends the proof of unmixedness of $J_{G_1,G_2}.$

In order to finish the proof of depth's formula, we use the following exact sequence:
\begin{equation}\label{seqdepth}
0\to \frac{S}{J}\to \frac{S}{J^\prime}\oplus\frac{S}{J^{\prime\prime}}\to \frac{S}{J^\prime+ J^{\prime\prime}}\to 0.
\end{equation}
 It is clear from the  decomposition of $J^{\prime\prime}$ that
\begin{equation}\label{tensor}
\frac{S}{J^{\prime\prime}}\cong \frac{S_1}{J_{G_1,H_1}}\otimes_K \frac{S_2}{J_{G_1,H_2}}
\end{equation}
where $S_1$ is the polynomial ring in the variables $x_{ij}, (i,j)\in [m]\times [a_r-1]$ and $S_2$ is the polynomial ring in the variables
$x_{ij}, (i,j)\in [m]\times [b_{r-1}+1,n].$ Since $J_{G_1,H_1}$ is unmixed and $H_1$ has $r-1$ cliques, by induction, it follows that
$\depth(S_1/J_{G_1,H_1})=a_r-1-(r-3)m+2r-5=a_r-(r-3)m+2r-6.$ Since $H_2$ is a clique, we get $\depth(S_2/J_{G_1,H_2})=n-b_{r-1}+m-1.$ Consequently, by
(\ref{tensor}), we obtain
\[
\depth(S/J^{\prime\prime})=n-(r-3)m+2r-5.
\]
Therefore,
\begin{equation}\label{depthmiddle}
\depth(S/J^\prime\oplus S/J^{\prime\prime})=n-(r-3)m+2r-5.
\end{equation}

Now we observe that $J^\prime+J^{\prime\prime}=J^\prime+(\{x_{ij}: (i,j)\in [m]\times T_0\})+J_{G_1,G_2^{\prime\prime}}=
J^\prime+ (\{x_{ij}: (i,j)\in [m]\times T_0\}$ since $J_{G_1,G_2^{\prime\prime}}$ is obviously contained in $J^\prime$. But this shows that
$S/(J^\prime+J^{\prime\prime})$ is nothing else than $S/J_{G_1,H}$ where $H$ is the graph obtained from $G_2^\prime$ by replacing its last clique on the vertex set $[a_{r-1},n]$ by the clique on the set $[a_{r-1},n]\setminus T_0.$  Therefore, $J_{G_1,H}$ is again unmixed and has $r-1$ cliques, so we may apply the inductive hypothesis. We then get
\begin{equation}
\depth(\frac{S}{J^\prime+J^{\prime\prime}})=\depth(\frac{S^\prime}{J_{G_1,H}})=n-|T_0|-(r-3)m+2r-5=n-(r-2)m+2r-4,
\end{equation}
where we denoted by $S^\prime$ the polynomial ring in the variables $x_{ij}$ with $(i,j)\in [m]\times ([n]\setminus T_0).$
Finally, by applying Depth Lemma in the sequence~(\ref{seqdepth}), we get
\[
\depth(S/J)=\depth(S/(J^\prime+J^{\prime\prime}))+1=n-(r-2)m+2r-3.
\]
The argument for the last claim in our theorem follows easily. If $G_2$ has $r$ cliques and  $J_{G_1,G_2}$ is Cohen-Macaulay, then the equality
$m+n-1=n-(r-2)m+2r-3$ must hold. Then we get $(r-1)m=2r-2$ which implies $m=2$ or $r=1.$ Hence, for $m\geq 3,$ $G_2$ must be complete.
\end{proof}

\section{A lower bound for the nilpotency index of $J_{G_1, G_2}$}

Let $I$ be an ideal in a Noetherian ring. Then there exists an integer $k$ such that $(\sqrt{I})^k \subset I$, where $\sqrt{I}$ denotes the radical of $I$. We call the  minimal number $k$ with this property the {\em index of nilpotency} of $I$ and denote it by $\nilpot(I)$. We have seen in Theorem~\ref{radical}, that $\nilpot(J_{G_1, G_2}) =1$ if and only if either $G_1$ or $G_2$ is complete. In this section we want to give a lower bound for $\nilpot(J_{G_1, G_2})$.

In the proof of the next result we shall need the following concept. Let $I$ be an ideal in a polynomial ring $S$ over a field, and let $X$ be a set of variables of $S$. We say that $I$ is {\em supported in $X$} if there exists a system of generators $f_1, \ldots, f_l$ of $I$ such that $X= \bigcup_{i=1}^{l} \supp(f_i)$, where for a polynomial $f$,  $\supp(f)$ denotes the set of variables which appear in $f$. If $I$ is supported in $X$, we call $X$ a {\em supporting set} of $I$.
\begin{Theorem}
\label{nilpot}
Let $T_1 \subset V(G_1)$ and $T_2 \subset V(G_2)$, and let $C_{11},\ldots, C_{1r}$ and $C_{21},\ldots, C_{2s}$ be those connected components of $(G_1)_{T_1}$ and $(G_2)_{T_2}$, respectively, which contain as an induced subgraph  a line graph of length at least 2. Then $\nilpot(J_{G_1, G_2}) \geq rs+1$.
\end{Theorem}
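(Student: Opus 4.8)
The plan is to exhibit, for each pair of indices $(p,q)$ with $1\le p\le r$ and $1\le q\le s$, an element $g_{pq}$ lying in $\sqrt{J_{G_1,G_2}}$, such that no product of fewer than $rs$ of these elements lies in $J_{G_1,G_2}$. The building block is the element (\ref{twolines}) from the proof of Theorem~\ref{radical}: inside each $C_{1p}$ pick an induced line graph with edges $\{i_p,j_p\},\{j_p,k_p\}$, and inside each $C_{2q}$ pick an induced line graph with edges $\{r_q,s_q\},\{s_q,t_q\}$. Set
\[
g_{pq}=x_{i_p t_q}x_{j_p r_q}x_{k_p s_q}-x_{i_p r_q}x_{j_p s_q}x_{k_p t_q}.
\]
As in Theorem~\ref{radical}, $g_{pq}^2\in J_{G_1,G_2}$, so $g_{pq}\in\sqrt{J_{G_1,G_2}}$; hence the product $G:=\prod_{p,q}g_{pq}$, a product of $rs$ radical elements, lies in $\left(\sqrt{J_{G_1,G_2}}\right)^{rs}$. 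The claim will follow once I show $G\notin J_{G_1,G_2}$, for then $\left(\sqrt{J_{G_1,G_2}}\right)^{rs}\not\subset J_{G_1,G_2}$, i.e.\ $\nilpot(J_{G_1,G_2})\ge rs+1$.

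To prove $G\notin J_{G_1,G_2}$ I would use a substitution/specialization argument, exactly in the spirit of the one in Theorem~\ref{radical}. The variables occurring in $G$ are the nine entries $x_{a b}$ with $a\in\{i_p,j_p,k_p: p\}$ and $b\in\{r_q,s_q,t_q: q\}$ — more precisely, for each $(p,q)$ only the six entries appearing in $g_{pq}$. The point is that because the chosen line graphs are \emph{induced} subgraphs of distinct connected components $C_{1p}$ (resp.\ $C_{2q}$), the only generators $p_{e,f}$ of $J_{G_1,G_2}$ whose support is entirely contained in the variable set of $G$ are, up to sign, the $2\times 2$ minors $p_{e,f}$ with $e\in\{\{i_p,j_p\},\{j_p,k_p\}\}$ for a single $p$ and $f\in\{\{r_q,s_q\},\{s_q,t_q\}\}$ for a single $q$; here one uses that $i_p,j_p,k_p$ lie in component $C_{1p}$, so no edge of $G_1$ joins a vertex of $C_{1p}$ to a vertex of $C_{1p'}$ with $p\ne p'$, and similarly for $G_2$, and that within $C_{1p}$ the induced subgraph on $\{i_p,j_p,k_p\}$ has exactly the two edges of the line graph. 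Therefore, substituting $0$ for every variable not appearing in $G$ carries $J_{G_1,G_2}$ onto the ideal $\sum_{p,q} I^{(pq)}$, where $I^{(pq)}$ is the ideal of $2$-minors attached to the pair of length-$2$ line graphs indexed by $(p,q)$ — and these ideals live in pairwise disjoint sets of variables, since the $rs$ blocks of six variables $\{x_{ab}: a\in\{i_p,j_p,k_p\},\,b\in\{r_q,s_q,t_q\}\}$ are disjoint.

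It thus suffices to check that $G$ is not in $\sum_{p,q}I^{(pq)}$, and since the $I^{(pq)}$ sit in separate variable sets while $G=\prod_{p,q} g_{pq}$ with $g_{pq}$ supported in the $(p,q)$-block, one reduces to the single-block statement: the product $g_{pq}$ of the $rs$ ``two-line'' binomials, viewed in the polynomial ring in the $6rs$ block variables, is not in $\sum_{p,q}I^{(pq)}$; and this in turn follows if for each fixed $(p,q)$ one knows $g_{pq}\notin I^{(pq)}$ together with a multidegree/monomial-ordering argument across blocks. The cleanest way is a Gröbner/initial-ideal argument: fix the lexicographic order of Theorem~\ref{radical}; the initial ideal of each $I^{(pq)}$ is generated by the squarefree monomials $\ini(p_{e,f})$, and one checks that neither of the two monomials of $g_{pq}$ — in particular the leading monomial $x_{i_p t_q}x_{j_p r_q}x_{k_p s_q}$ (after the relabelling forced by $i_p<j_p<k_p$, $r_q<s_q<t_q$) — is divisible by any of them, exactly as in the proof that $f_{L_1,L_2}\notin I$ in Theorem~\ref{radical}. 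Multiplying over $(p,q)$, the leading monomial of $G$ is the product of these leading monomials, which lies in disjoint variable blocks and hence is not in $\ini\!\big(\sum_{p,q}I^{(pq)}\big)=\sum_{p,q}\ini(I^{(pq)})$; therefore $G\notin\sum_{p,q}I^{(pq)}$ and a fortiori $G\notin J_{G_1,G_2}$.

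The main obstacle is the bookkeeping in the specialization step: one must verify carefully that setting the outside variables to zero does \emph{not} produce, from some generator $p_{e,f}$ with $e$ or $f$ not among the chosen line-graph edges, a spurious monomial relation that could absorb $G$. This is where the hypothesis that $C_{1p},\dots$ are \emph{distinct} connected components and that the length-$2$ line graphs are taken \emph{induced} is essential — it guarantees that every $p_{e,f}$ either has a variable outside the $G$-support (and so dies) or lies in exactly one block $I^{(pq)}$, with that block being precisely the two-line ideal. Once this is established, the rest is the monomial-order argument above, which is routine given Theorem~\ref{radical}.
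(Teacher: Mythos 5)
Your overall strategy coincides with the paper's: form the product $G$ of the two-line binomials $g_{pq}$, note $g_{pq}\in\sqrt{J_{G_1,G_2}}$, and show $G\notin J_{G_1,G_2}$ by specializing to disjoint variable blocks and running an initial-ideal argument. However, the specialization step as you state it fails. You set to zero \emph{every} variable outside $\supp(G)$, i.e.\ in each block you keep only the six variables occurring in $g_{pq}$ and kill the remaining three, namely $x_{i_ps_q}$, $x_{j_pt_q}$ and $x_{k_pr_q}$ (taking $i_p<j_p<k_p$ and $r_q<s_q<t_q$). But each of the four generators $p_{e,f}$ with $e\in\{\{i_p,j_p\},\{j_p,k_p\}\}$ and $f\in\{\{r_q,s_q\},\{s_q,t_q\}\}$ involves one of these three killed variables; in fact \emph{no} generator of $J_{G_1,G_2}$ has support contained in $\supp(G)$, contrary to your claim. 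Under your substitution, $[i_p,j_p|r_q,s_q]=x_{i_pr_q}x_{j_ps_q}-x_{i_ps_q}x_{j_pr_q}$ maps to the monomial $x_{i_pr_q}x_{j_ps_q}$, which divides the term $x_{i_pr_q}x_{j_ps_q}x_{k_pt_q}$ of $g_{pq}$, while $[j_p,k_p|r_q,s_q]$ maps to $x_{j_pr_q}x_{k_ps_q}$, which divides the other term $x_{i_pt_q}x_{j_pr_q}x_{k_ps_q}$. Hence the image of $J_{G_1,G_2}$ under your substitution contains $g_{pq}$, and therefore $G$; it is not the ideal $\sum_{p,q}I^{(pq)}$, and this specialization cannot certify $G\notin J_{G_1,G_2}$.

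The repair is to specialize less aggressively: retain for each $(p,q)$ the full nine-variable block $\{i_p,j_p,k_p\}\times\{r_q,s_q,t_q\}$ (or, as the paper does, all of $V(C_{1p})\times V(C_{2q})$ together with the blocks of the remaining components), killing only variables outside these blocks. Because the chosen line graphs are induced and sit in distinct connected components, the image of $J_{G_1,G_2}$ is then a sum of honest binomial ideals supported in pairwise disjoint variable sets, each containing $\supp(g_{pq})$, and one concludes by the paper's Lemma~\ref{useful}: one must first replace each $g_{pq}$ by its normal form modulo a Gr\"obner basis of its block ideal (checking non-divisibility by the initial terms of the four generators alone is not sufficient), after which the initial monomial of the product lies in no $\ini(I^{(pq)})$ and hence $G$ is not in the sum. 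With that correction your argument becomes essentially the paper's proof.
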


\begin{proof}
 In each $C_{ij}$ we choose a line graph $L_{ij}$ of length 2 which is an induced subgraph of  $C_{ij}$, and let $f_{ij} = f_{L_{1i} L_{2j}}$, as defined in (\ref{twolines}). Then, since $L_{1i}$ and $L_{2j}$ are also induced subgraphs of $G_1,$ respectively $G_2,$  it follows  that  $f_{ij} \notin J_{G_1, G_2}$, but $f^2_{ij} \in J_{G_1, G_2}$, as shown  in the proof of Theorem~\ref{radical}. Let $I$ be the ideal generated by the $f_{ij}$. Then  $I \subset \sqrt{J_{G_1, G_2}}$. We claim that $f=\prod_{i=1, \ldots , r \atop j=1, \ldots, s} f_{ij}$ does not belong to $J_{G_1,G_2}$. This then implies that $I^{rs} \nsubseteq J_{G_1,G_2}$, and we obtain the desired inequality for the nilpotency index for $J_{G_1,G_2}$.

In order to prove the claim, let $L= (\{x_{kl} \: \; (k,l) \in T_1 \times T_2\})$, and mark by `overline' reduction modulo $L$. Then $f= \bar{f}$ and
\[
\bar{J}_{G_1,G_2} = \sum_{i=1, \ldots , r \atop j=1, \ldots, s} J_{C_{1i}, C_{2j}} + J_0 ,
\]
where $J_0$ is the sum of the ideals of the form $J_{C,D}$ for the remaining connected components $C$ of $(G_1)_{T_1}$ and $D$ of $(G_2)_{T_2}$ which are different from the $C_{ij}$. Moreover, there exist supporting sets $X_{ij}$ for  $J_{C_{1i}, C_{2j}}$ and $X_0$ for $J_0$ (resulting from the generating  $2$-minors of these ideals)  such that $\supp(f_{ij}) \subset X_{ij}$ for all $i,j$, and such that all the supporting sets, including $X_0$, are pairwise disjoint.

Now suppose that $f\in J_{G_1, G_2}$. Then $f \in \bar{J}_{G_1, G_2}$ because $f = \bar{f}$. The next lemma however shows that   $f \notin \bar{J}_{G_1, G_2}$, a contradiction.  Thus $f \notin J_{G_1, G_2}$. This  proves the claim and the theorem.
\end{proof}

\begin{Lemma}
\label{useful}
Let $I_1, \ldots, I_r$ be ideals in a polynomial ring $S$ with supporting sets $X_1, \ldots, X_r$, and let $f_1, \ldots f_r$ be polynomials in $S$ such that $f_j \notin I_j$ for $j=1, \ldots, r$. Let $I=\sum_{i=1}^r I_j$ and $f=\prod_{i=1}^{r} f_i$, and suppose that
\begin{enumerate}
\item[{\em (i)}] $X_i \cap X_j = \emptyset$ for all $i \neq j$, and
\item[{\em (ii)}]  $\supp(f_i) \subset X_i$ for all $i$.
\end{enumerate}
Then $f \notin I$.
\end{Lemma}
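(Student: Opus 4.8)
The plan is to reduce the problem to a statement about polynomial rings over disjoint sets of variables by passing to a suitable quotient, or equivalently by a grading/specialization argument. First I would split the ambient ring: set $X=\bigcup_i X_i$, let $Y$ be the remaining variables of $S$, and observe that after adjoining $Y$ to the coefficient field it suffices to prove the statement when $S=K[X]$ and each $I_i$ is generated by polynomials in $K[X_i]$ only; the polynomials $f_i$ also lie in $K[X_i]$ by hypothesis (ii). Thus we have a tensor decomposition $S=\bigotimes_{i=1}^r S_i$ with $S_i=K[X_i]$, each $I_i$ is extended from $S_i$, and $I=\sum_i I_iS$ is the ideal of the tensor product generated by the $I_i$.

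Next I would use the standard fact that for ideals extended from the tensor factors one has a natural isomorphism
\[
S/I \;=\; S/(I_1S+\dots+I_rS)\;\cong\; (S_1/I_1)\otimes_K \cdots \otimes_K (S_r/I_r).
\]
Under this isomorphism the image of $f=\prod_i f_i$ is the elementary tensor $\bar f_1\otimes\cdots\otimes\bar f_r$, where $\bar f_i$ denotes the class of $f_i$ in $S_i/I_i$. By hypothesis $f_i\notin I_i$, so each $\bar f_i$ is a nonzero element of the $K$-vector space $S_i/I_i$. A nonzero elementary tensor of nonzero vectors in a tensor product of vector spaces over a field is itself nonzero — this is the one genuinely nontrivial input, and it is exactly where one needs the base ring to be a field. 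Hence $\bar f_1\otimes\cdots\otimes\bar f_r\neq 0$, i.e.\ $f\notin I$, as claimed.

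I expect the main (and essentially the only) obstacle to be making the "disjoint variables" bookkeeping precise: one must be careful that the supporting sets $X_i$ are literally disjoint sets of indeterminates (hypothesis (i)), that the extra variables $Y$ do no harm (they can be absorbed into the coefficient field, since $I$ has a generating set supported in $X$), and that passing to the quotient $S/I$ does not lose the information that $f\notin I$ — but this last point is a tautology. Everything else is formal: the tensor-product description of the quotient and the nonvanishing of elementary tensors over a field. One could alternatively phrase the argument without tensor products, by choosing for each $i$ a $K$-linear functional $\varphi_i$ on $S_i$ with $\varphi_i(I_i)=0$ and $\varphi_i(f_i)\neq 0$, and then checking that the induced functional $\varphi_1\otimes\cdots\otimes\varphi_r$ on $S$ vanishes on $I$ but not on $f$; this avoids invoking tensor products of algebras explicitly and may be the cleanest write-up.
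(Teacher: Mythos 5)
Your argument is correct, but it takes a genuinely different route from the paper's. The paper works with a monomial order: using (i) it observes that $\ini_<(I)=\sum_{j}\ini_<(I_j)$, replaces each $f_j$ by its remainder modulo a Gr\"obner basis of $I_j$ (which changes $f$ only by an element of $I$ and arranges $\ini_<(f_j)\notin\ini_<(I_j)$), and then derives a contradiction from the fact that any monomial generator of some $\ini_<(I_j)$ dividing $\ini_<(f)=\prod_i\ini_<(f_i)$ would, by disjointness of the supports, have to divide the single factor $\ini_<(f_j)$. You instead pass to the quotient and use the isomorphism $S/I\cong (S_1/I_1)\otimes_K\cdots\otimes_K(S_r/I_r)$ together with the nonvanishing of an elementary tensor of nonzero vectors over a field; the functional $\varphi_1\otimes\cdots\otimes\varphi_r$ you mention at the end is precisely the standard proof of that nonvanishing, so the two variants you offer are really one argument. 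Both proofs rest on the same bookkeeping — each $I_j$ is extended from the subring $K[X_j]$, the sets $X_j$ are disjoint, and the remaining variables are harmless because $S$ is free (hence faithfully flat) over $K[X]$, which is also what guarantees that $f_i\notin I_i$ survives your reduction to $S=K[X]$. The paper's version stays inside the Gr\"obner basis framework it already uses elsewhere and needs no tensor products; yours is more structural and arguably cleaner, at the modest cost of justifying the tensor decomposition of $S/I$. Either write-up is complete.
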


\begin{proof}
Choose any monomial order $<$ on $S$. It follows from (i) that $\ini_<(I) = \sum_{j=1}^r \ini_<(I_j)$. Let $g_j$ be the remainder of $f_j$ with respect to a Gr\"obner basis of $I_j$. Since $f_j = g_j + h_j$ with $h_j \in I_j$, it follows that $f=\prod_{i=1}^r g_j +h$ where $h \in I$. Hence we see that $f \notin I$ if and only if $\prod_{i=1}^r g_j \notin I$. Thus we may replace the $f_j$ by the $g_j$, and hence may assume from the very beginning that $\ini_< (f_j) \notin \ini_<(I_j)$.

Suppose that $f \in I$. Then $\ini_<(f)\in\ini_<(I)$ and therefore $\prod_{i=1}^r\ini_<(f_i)\in \sum_{j=1}^r \ini_<(I_j)$. This implies that   for some $j$ there exists a monomial generator $u\in \ini_<(I_j)$  such that $u$ divides  $\prod_{i=1}^r\ini_<(f_i)$. Since $\supp(u)\subset X_j$, it follows from (i) and (ii) that $u$ divides $\ini_<(f_j)$. This is a contradiction, since $\ini_<(f_j)\notin \ini_<(I)$.
\end{proof}

We give a concrete example of Theorem~\ref{nilpot} in the form of the following corollary.

\begin{Corollary}
\label{notbest}
Let $J$ be the ideal of adjacent minors of an $m \times n$ matrix, and let  $k$ and $l$ be  integers such that $m=4k+p$ and $n=4l+q$ with $0\leq p,q < 4$. Then
\[
\nilpot(J) \geq (k + \lfloor \frac{p}{3} \rfloor) (l + \lfloor \frac{q}{3} \rfloor)+1 \approx \frac{mn}{16}.
\]
In particular, the index of nilpotency of the binomial edge of a pair of graphs can be arbitrarily big.
\end{Corollary}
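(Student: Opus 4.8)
The plan is to obtain Corollary~\ref{notbest} as a direct application of Theorem~\ref{nilpot}. By Example~(d) above, the ideal $J$ of adjacent $2$-minors of the $m\times n$ matrix $X$ equals $J_{G_1,G_2}$, where $G_1$ is the path graph on $[m]$ and $G_2$ is the path graph on $[n]$. Hence it suffices to choose $T_1\subseteq V(G_1)$ and $T_2\subseteq V(G_2)$ so that $(G_1)_{T_1}$ has $r$ connected components each containing an induced line graph of length $\geq 2$, and $(G_2)_{T_2}$ has $s$ such components; then Theorem~\ref{nilpot} gives $\nilpot(J)\geq rs+1$. The whole task is therefore to make $r$ and $s$ as large as possible.

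First I would record the elementary observation that an induced subgraph of a path graph is a disjoint union of paths: its connected components are exactly the maximal intervals of consecutive integers contained in the chosen vertex set, and such a component contains an induced line graph of length $2$ if and only if it has at least three vertices. So the largest $r$ realizable for $G_1$ is the maximal number of pairwise non-adjacent intervals of length $\geq 3$ that can be packed into $[m]$. Packing $r$ intervals of length exactly $3$ separated by a single omitted vertex uses $3r+(r-1)=4r-1$ vertices and is clearly optimal, so this largest $r$ equals $\lfloor (m+1)/4\rfloor$.

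Next comes a purely arithmetic step. Writing $m=4k+p$ with $0\leq p<4$ gives $\lfloor (m+1)/4\rfloor=k+\lfloor (p+1)/4\rfloor$, and a check of the four residues $p\in\{0,1,2,3\}$ shows $\lfloor (p+1)/4\rfloor=\lfloor p/3\rfloor$; hence the optimal value is $r=k+\lfloor p/3\rfloor$. Symmetrically, with $n=4l+q$ and $0\leq q<4$, the optimal value for $G_2$ is $s=l+\lfloor q/3\rfloor$. Applying Theorem~\ref{nilpot} with vertex sets $T_1,T_2$ achieving these packings yields $\nilpot(J)\geq rs+1=(k+\lfloor p/3\rfloor)(l+\lfloor q/3\rfloor)+1$. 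Since $k=\lfloor m/4\rfloor$ and $l=\lfloor n/4\rfloor$, this lower bound is asymptotically $mn/16$, and it tends to $\infty$ as $m,n\to\infty$; as $J$ is the binomial edge ideal of a pair of graphs, the last assertion of the corollary follows.

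I do not expect any serious obstacle: all the substance is carried by Theorem~\ref{nilpot}, and for path graphs the underlying combinatorial optimisation — packing disjoint induced paths on three vertices — is trivial. The only point requiring care is the bookkeeping in the arithmetic identity $\lfloor (m+1)/4\rfloor=k+\lfloor p/3\rfloor$ (and its analogue for $n$), which one simply verifies case by case on the residue.
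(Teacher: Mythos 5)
Your proposal is correct and follows essentially the same route as the paper: both apply Theorem~\ref{nilpot} to the pair of path graphs with the vertex sets arranged so that the surviving components are $3$-vertex segments separated by single omitted vertices (the paper phrases this by taking $T_1=\{4,8,\dots\}$, the complement of your packed intervals), yielding $r=k+\lfloor p/3\rfloor$ and $s=l+\lfloor q/3\rfloor$. Your write-up merely makes explicit the optimality of the packing and the arithmetic identity $\lfloor (m+1)/4\rfloor=k+\lfloor p/3\rfloor$, which the paper leaves to the reader.
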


\begin{proof}
We apply Theorem~\ref{nilpot}, in the case that $G_1$ is a line graph on $[m]$, and $G_2$ is a line graph on $[n]$. We choose $T_1=\{4a \:\; a \in [m], 4a \leq m\}$ and $T_2=\{4b \:\; b \in [n], 4b \leq n\}$. Then Theorem~\ref{nilpot} yields the desired conclusion.
\end{proof}

The bound given in Corollary~\ref{notbest} is definitely not the best possible. Calculations by computer show that  for $k=2,3,4$ the  index of nilpotency of the ideal of adjacent 2-minors of $3 \times 3k$ matrix is at least $k+1$. While our Corollary~\ref{notbest} only gives  $k$ as the lower bound for the index of nilpotency in these cases.

{}

\end{document}